\numberwithin{equation}{section}
\theoremstyle{plain}
\newtheorem{Th}{Theorem}[section]
\newtheorem{Lemma}[Th]{Lemma}
\newtheorem{Cor}[Th]{Corollary}
\newtheorem{Prop}[Th]{Proposition}
\theoremstyle{definition}
\newtheorem{Def}[Th]{Definition}
\newtheorem{Rem}[Th]{Remark}
\newtheorem{?}[Th]{Problem}
\newtheorem*{theorem*}{Condition A}
\begin{document}
	
	\title{alternating links with totally geodesic checkerboard surfaces}
	
	\author[Hong-Chuan Gan]{Hong-Chuan Gan}
	
	\address{Department of Mathematics, Sun Yat-sen University, 510275, Guangzhou, China}
	
	\email{hcganmath@gmail.com}
	
	
	
	\begin{abstract} 
		We prove that alternating links with two totally geodesic checkerboard surfaces are three links with projection the 1-skeleton of the octahedron, the cuboctahedron and the icosidodecahedron.
		Then we characterize these links as right-angled completely realisable links and show that all hyperbolic weaving knots with two exceptions have both checkerboard surfaces not totally geodesic.
	\end{abstract}
	
	\maketitle
	
	\section{Introduction}
	A knot or link is hyperbolic if its complement in $S^3$ admits a complete hyperbolic metric.
	This metric is unique up to isometry by Mostow-Prasad rigidity\cite{mostow2016strong,prasad1973strong}.
	Due to works of Menasco, it is known that there exists a complete hyperbolic metric on the complement of a prime alternating link in $S^3$ that is not a (2,$q$)-torus link\cite{menasco1984closed}.
	An embedded or immersed surface in the knot complement is called totally geodesic if it is isotopic to a surface that lifts to a set of geodesic planes in $\mathbb{H}^3$.
	Adams and Schoenfeld utilize lifts of rigid hyperbolic 2-orbifolds to generate totally geodesic Seifert surfaces and give some examples of emmbedded totally geodesic surfaces in knot and link complement\cite{adams2005totally}, for example the balanced Pretzel links has one totally geodesic checkerboard surface.
	In \cite{adams2008totally}, the authors generalize these results and give more examples.
	In particular, they prove that alternating links created from the 1-skeleton of the octahedron, the cuboctahedron and the icosidodecahedron have two totally geodesic checkerboard surfaces.
	Champanerkar, Kofman and Purcell provide examples of two links living in $\mathbb{T}^2 \times I$ where $\mathbb{T}^2$ is the torus, namely the triaxial link and the square-weave link, such that both checkerboard surfaces on the torus are totally geodesic\cite{champanerkar2019geometry}.
	It is an open question whether any alternating knot admits two totally geodesic checkerboard surfaces\cite{champanerkar2019geometry}.
	More generally, one can ask whether any knot has a projection with both checkerboard surfaces totally geodesic\cite{adams2016spanning}.
	
	In this article, we show that the only alternating links in $S^3$ with two totally geodesic checkerboard surfaces are alternating links with projection the 1-skeleton of the octahedron,the cuboctahedron and the icosidodecahedron and characterize them as right-angled completely realisable links.
	Completely realisable alternating links are alternating links whose checkerboard polyhedra can be realized directly as ideal hyperbolic polyhedra which can then be glued together to give the complete hyperbolic structure on the link complement.
	Aitchison and Reeves show that if the checkerboard polyhedron of a completely realisable alternating links is realized as a 3-valent polyhedron, then the polyhedron has the combinatorial type of a prism or a 3-valent Archimedean solid\cite{aitchison2002archimedean}.
	There is also completely realisable alternating links of which checkerboard polyhedron can be realized as a 4-valent ideal hyperbolic polyhedron, for example, weaving knots $W(3,n)(n\geq3)$(\cite[Example 6.8.11]{thurston1979geometry},\cite[Example 7.4]{aitchison2002archimedean}).
	We show that $L$ is a completely realisable alternating links of which checkerboard polyhedra can be realized as 4-valent right-angled ideal hyperbolic polyhedra iff they are one of three links mentioned above.
	A right-angled hyperbolic polyhedron is a hyperbolic polyhedron with dihedral angles $\pi/2$.
	In \cite{champanerkar2019right}, Champanerkar, Kofman and Purcell associate a set of hyperbolic right-angled polyhedra to any reduced,prime,alternating link diagram of a link $L$ and prove that volume sum of this set called right-angled volume(denoted by $vol^{\perp}(L)$) is a geometric link invariant.
	They ask \textit{Does there exist a hyperbolic alternating link $L$, besides the Borromean link, for which $vol^{\perp}(L) = vol(L)$?}.
	We show that three links mentioned above are such links.
	We also resolve a case of their conjecture on right-angled knot; see Remark \ref{resolveCKP}.%
	
	\subsection{Organization}
	In Section \ref{definitions}, we recall some definitions in hyperbolic knot theory.
	In Section \ref{proof of the main theorem}, we prove that if checkerboard surfaces of an alternating knot or link $L$ are totally geodesic, then they intersect each other at right angles.
	If in addition n-gons in the diagram are regular, then $L$ is one of three links.
	Then we characterize these three links as right-angled completely realisable links and show that the regularity condition can be removed.
	
	\subsection{Acknowledgement}
	I thank Jessica Purcell for helpful discussions.
	I thank the anonymous referees for many helpful suggestions, especially for strengthening Proposition \ref{regular imply no} to Theorem \ref{main main results}.
	
	\section{definitions} \label{definitions}
	\begin{Def}
		An \textit{alternating} diagram is a diagram of a link with an orientation such that when following a component of the link in the direction of the orientation, the crossings alternate between over and under along the component.
		A knot or link is called \textit{alternating} if it admits a alternating diagram.
	\end{Def}
	\begin{Def}
		A link $L \subset S^3$ with at least two components is a \textit{split} link if there is a 2-sphere in $S^3-L$ separating $S^3$ into two balls and each ball contains at least one component of $L$.
		A link diagram $D$ in $S^2$ is a \textit{split} diagram if there is a simple closed curve $\gamma$ in the projection plane $S^2$ and $\gamma$ separates $S^2$ into two discs each containing part of $D$.
	\end{Def}
	\begin{Def}
		A \textit{reducible crossing} is a crossing through which we may draw a circle $\gamma$ on the projection plane such that $\gamma$ meets the diagram only at the crossing.
		A diagram is \textit{reduced} if it contains no reducible crossings.
	\end{Def}
	\begin{Def}
		A \textit{crossing arc} is defined to be an embedded arc in $S^3$ with endpoints on the knot or link $K$, which projects to a single point lying at a crossing in the diagram of $K$.	
	\end{Def}
	\begin{Def}
		A projection of the knot or link diagram $D$ divides the projection plane $S^2$ into regions.
		A region with n crossings on its boundary will be called a \textit{projection $n$-gon} of the diagram $D$.
	\end{Def}
	\begin{Def}
		An \textit{$n$-gon} coresponding to a projection $n$-gon of the knot or link diagram $D$ is a disk in the complement with boundary alternating between knot and crossing arc(see Figure \ref{checkerboard locally}).
		In the following we abuse terminology and say \textit{an $n$-gon in D} or \textit{D has an $n$-gon}.
		We say an $n$-gon is \textit{opposite} to an $m$-gon if their projections share a common vertex but do not share any edge.
	\end{Def}
	\begin{Def}
		Let $K$ be a knot or link.
		Consider the checkerboard coloring of the projection of diagram $D$ of $K$.
		The \textit{checkerboard surface} is constructed by gluing $n$-gons corresponding to projection $n$-gons in the checkerboard coloring along crossing arcs.
		See Figure \ref{checkerboard}.
	\end{Def}
	Let $K$ be a hyperbolic alternating knot or link given by a reduced alternating diagram $D$, then checkerboard surfaces of $D$ are essential\cite{menasco1993classification,purcell2020hyper} and quasifuchsian(\cite[Theorem 1.9]{adams2007noncompact},\cite{futer2014quasifuchsian}) in ${S}^3-K$.
	Crossing arcs of a reduced alternating diagram of a hyperbolic alternating knot or link lift and are homotopic to geodesics in $\mathbb{H}^3$\cite{thistlethwaite2014alternative}.
	If one checkerboard surface $S$ is totally geodesic, then $S$ is isotopic to $S'$ which lifts to totally geodesic planes in $\mathbb{H}^3$ and crossing arcs of the reduced alternating diagram lift and are isotopic to geodesics in these totally geodesic planes.
	\begin{figure}
		\centering
		\captionsetup[subfigure]{width=0.95\linewidth, justification=raggedright}%
		\begin{subfigure}[t]{.3\textwidth}
			\centering
			\includegraphics[width=.8\linewidth]{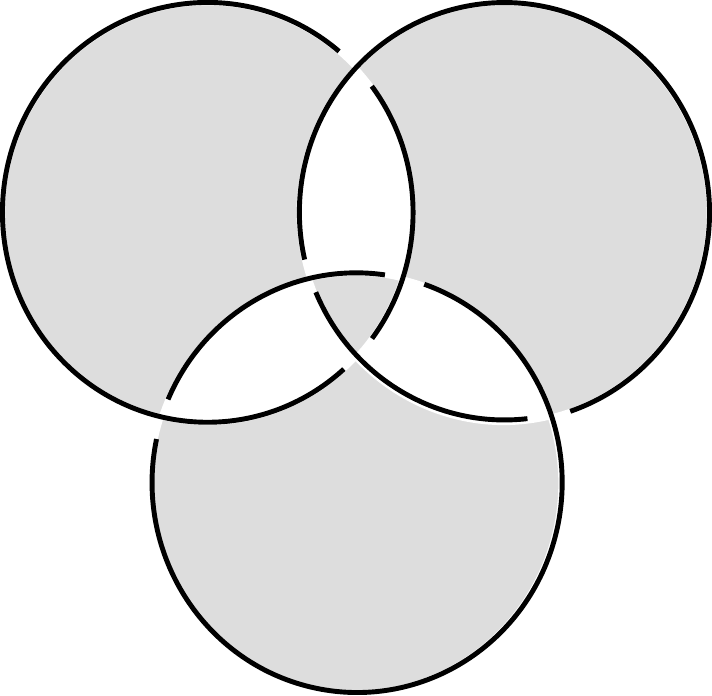}
			\caption{a checkerboard surface of the Borromean rings}
			\label{checkerboard of Borromean}
		\end{subfigure}%
		\begin{subfigure}[t]{.5\textwidth}
			\centering
			\includegraphics[width=1.2\linewidth]{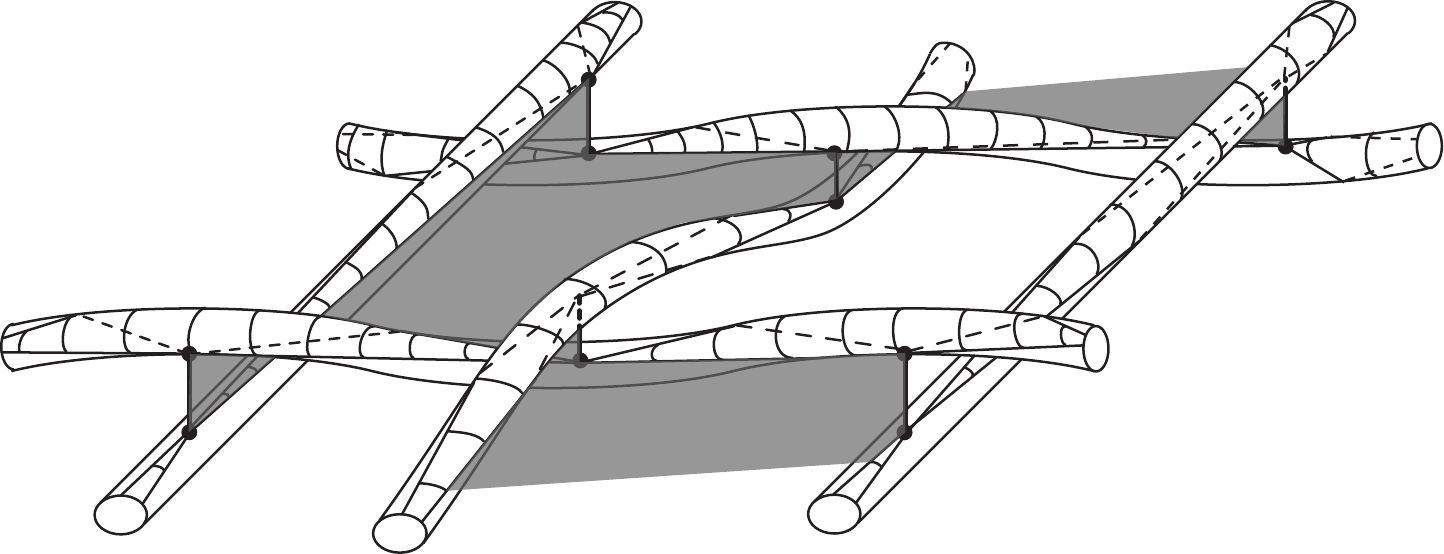}
			\caption{checkerboard surface near crossing arcs; figure modified from \protect\cite{adams2006cusp}}
			\label{checkerboard locally}
		\end{subfigure}%
	\caption{}
	\label{checkerboard}
	\end{figure}

	\subsection{checkerboard decomposition}See \cite{menasco20polyhedra,purcell2020hyper} and \cite[Chapter 2]{weeks1985hyperbolic} for more details.
	Let $L$ be a hyperbolic alternating knot or link, let $D$ be its reduced alternating diagram.
	Cutting the complement $S^3-L$ along two checkerboard surfaces of $D$ obtains two (topological) polyhedra.
	After collopsing the edges coming from the link, we obtain two combinatorial polyhedra with 4-valent vertices and a checkerboard coloring coming from the checkerboard coloring of the projection graph.
	They are mirror image of each other.
	Removing their vertices obtains ideal polyhedra.
	Each one is called a \textit{checkerboard polyhedron} associated to the link diagram $D$.
	One can glue them to obtain the link complement $S^3-L$.
	The gluing rotates faces with one coloring by one edge in the clockwise direction, and rotates faces with the other coloring by one edge in the counterclockwise direction.
	These two topological polyhedra do not necessarily agree with the complete hyperbolic structure.
	That is, there may not be two ideal hyperbolic polyhedra that have the same combinatorial type as checkerboard polyhedra and can be glued in the same manner as the checkerboard polyhedra to give the complete hyperbolic structure of $S^3-L$.
	We define horoball neighborhood of a link following \cite{futer2016survey}.
	\begin{Def}
		The hyperbolic knot or link complement $M$ admits a unique complete hyperbolic structure by Mostow-Prasad rigidity.
		The ends of M have the form $\mathbb{T}^2 \times [1,\infty)$.
		Under the covering map $p:\mathbb{H}^3 \to M$, each end is geometrically realized as the image of a horoball $H_i \subset \mathbb{H}^3$.
		The preimage of each end is a collection of horoballs.
		Shrinking $H_i$ if necessary, we can ensure that all horoballs in the preimage of an end have disjoint interiors in $\mathbb{H}^3$.
		For such a choice of $H_i$, $p(H_i)=C_i$ is said to be a horoball neighborhood of the cusp $C_i$, or \textit{horocusp} in $M$.
	\end{Def}

\section{proofs of the main results}	\label{proof of the main theorem}
\subsection{Totally geodesic checkerboard surfaces are perpendicular}
	The following lemma is a result in \cite{adams2008totally}
	\begin{Lemma} \label{no bigon region}
		Let L be a hyperbolic alternating knot or link with a reduced alternating diagram $D$.
		If checkerboard surfaces of D are totally geodesic in $S^3-L$, then D has no 2-gon.
	\end{Lemma}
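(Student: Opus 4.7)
The plan is to derive a contradiction from the assumption that $D$ has a 2-gon, by showing that a 2-gon face of a totally geodesic checkerboard surface would be a degenerate ideal bigon, which is geometrically impossible in $\mathbb{H}^3$. Suppose $D$ contains a 2-gon region $R$. Under the checkerboard coloring, $R$ lies in exactly one of the two colors, so it contributes a 2-gon face $F$ to exactly one of the checkerboard surfaces, say $S$, which by hypothesis is totally geodesic. Viewed in the checkerboard polyhedron associated to $D$, the face $F$ has two ideal vertices (the two crossings on $\partial R$) and two edges joining them (the two crossing arcs of the 2-gon).

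Next I would exploit the totally geodesic hypothesis. After isotopy, $S$ lifts to a union of totally geodesic planes in $\mathbb{H}^3$, and crossing arcs of $D$ lift to geodesics lying in these planes, as recorded in the excerpt. Therefore any lift of $F$ to $\mathbb{H}^3$ is an ideal bigon contained in a totally geodesic plane $P \cong \mathbb{H}^2$, with two ideal vertices on $\partial\mathbb{H}^3$ and two geodesic edges connecting them. The final step is to invoke the fundamental fact that any two distinct ideal points of $\mathbb{H}^3$ are the endpoints of a unique geodesic; hence the two lifted edges must coincide as geodesics in $P$, contradicting $F$ being a non-degenerate embedded disk bounded by two distinct edges.

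The main obstacle, and essentially the only genuinely checkable point, is verifying that the two ideal vertices of the lifted bigon are distinct points of $\partial\mathbb{H}^3$, so that uniqueness of the connecting geodesic applies. This should follow because the two crossings on $\partial R$ are two distinct points of $L$ (they are distinct vertices of the checkerboard polyhedron, and a 2-gon in a reduced alternating diagram consists of two distinct crossings), and a single embedded lift of the face $F$ must send distinct ideal vertices to distinct ideal points of $\partial\mathbb{H}^3$; otherwise the lift would collapse to a degenerate geodesic arc rather than an embedded disk in $P$. Beyond this point I do not expect any further difficulty.
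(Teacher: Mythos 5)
There is a genuine gap at the final step. Your argument correctly shows that the two crossing arcs bounding the 2-gon face $F$ are homotopic (via $F$ itself, rel the cusp) and hence have the same geodesic representative, so that ``straightening'' the edges collapses the bigon. But this is not a contradiction with anything: the source you quote only says crossing arcs are \emph{isotopic to} geodesics in the lifted planes, not that they \emph{are} geodesics, and a totally geodesic cusped surface can perfectly well carry two disjoint, properly isotopic essential arcs cutting off a bigon --- straightening such a pair always identifies them, and the only conclusion is that the induced polygonal decomposition cannot be realized with geodesic edges, not that the surface fails to be geodesic. (This degeneration of edges at bigons is a known phenomenon; it is exactly why Thistlethwaite--Tsvietkova restrict to taut diagrams, and why the paper itself, in the section removing the regularity hypothesis, treats ``two distinct topological edges become isotopic'' as a case that must be excluded by a separate theorem rather than as an automatic absurdity.) So ``the two edges must coincide, contradicting $F$ being a non-degenerate embedded disk'' conflates the topological face with its straightened image; the straightened image simply \emph{is} degenerate, and you have not shown this conflicts with total geodesy of $S$.

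The missing input is the \emph{other} checkerboard surface. The two crossing arcs of the 2-gon lie on the surface $S'$ of the opposite color, and in $\mathbb{H}^3$ they sit on two lifts $Q_1,Q_2$ of $S'$, which are disjoint geodesic planes. Since both arcs are isotopic to the geodesic joining the same pair of ideal points $c_1,c_2$, both $Q_1$ and $Q_2$ would have $c_1,c_2$ on their ideal boundaries; if $Q_1\neq Q_2$ they would then intersect along that geodesic, contradicting embeddedness of $S'$, and one must separately rule out $Q_1=Q_2$ (essentiality of the bigon). This is precisely how the paper proceeds: it applies the 2-gon to the surface \emph{not} containing it and quotes Theorems 3.2 and 3.1 of Adams--Bennett et al.\ (essentiality of the 2-gon in the complement of that surface, and the incompatibility of an essential 2-gon with total geodesy). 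To repair your proof you would need to switch surfaces and supply (or cite) exactly these two steps.
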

	Note that since $D$ is a diagram on the projection plane(a sphere in $S^3$), the outermost $n$-gon is considered too.
	\begin{proof}
		If not, suppose $G_2$ is a 2-gon in $D$.
		Let $S$ be the checkerboard surface that does not contain $G_2$.
		Since $S$ is totally geodesic, by \cite[Theorem 3.2]{adams2008totally}, $G_2$ is an essential 2-gon in the complement of $S$.
		By \cite[Theorem 3.1]{adams2008totally}, $S$ cannot be totally geodesic.
		This contradicts to the assumptions.
	\end{proof}
	
	\begin{Lemma} \label{3-gon exists}
		Let L be an alternating knot or link with a reduced alternating diagram $D$.
		If the projection graph G of D on the projection plane $S^2$ has no 2-gons, then G has a 3-gon.
	\end{Lemma}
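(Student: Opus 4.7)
The plan is a short Euler-characteristic argument applied to the 4-valent planar graph $G$ underlying the reduced alternating diagram $D$. The ingredients are 4-valence at every crossing, Euler's formula on $S^2$, and the observation that $D$ being reduced prevents any projection 1-gons (a 1-gon would mean a nugatory crossing); the hypothesis then kills 2-gons as well, and after that pure bookkeeping forces a 3-gon to appear.

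First, since each crossing is a 4-valent vertex we have $2E = 4V$, so $E = 2V$. Combined with $V - E + F = 2$ on $S^2$, this gives $F = V + 2$. (If $D$ is a split diagram, one applies the argument to each non-split summand separately.) Writing $f_n$ for the number of projection $n$-gons, the identities $\sum_n f_n = F$ and $\sum_n n f_n = 2E$ combine to
\[
\sum_n (4-n)\,f_n \;=\; 4F - 2E \;=\; 4(V+2) - 4V \;=\; 8.
\]

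Since $f_1 = 0$ by reducedness and $f_2 = 0$ by hypothesis, the left-hand side collapses to
\[
f_3 \;-\; \sum_{n \geq 5}(n-4)\,f_n \;=\; 8,
\]
so $f_3 \geq 8$. In particular $D$ has a 3-gon (in fact at least eight of them, counting the outermost region of the $S^2$-diagram on equal footing with the others).

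There is no real obstacle; the only small point worth verifying is that $f_1 = 0$ in a reduced alternating diagram, which is immediate from the definition of a \emph{reducible crossing} given earlier in the paper. The argument does not use alternation at all, so the conclusion in fact holds for any reduced 4-valent planar graph without bigons.
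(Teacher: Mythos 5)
Your proof is correct and uses essentially the same Euler-characteristic count on the 4-valent projection graph as the paper; the paper phrases it as a contradiction (no 3-gons would force $F \leq E/2$ and hence $V-E+F \leq 0$), while you compute the identity $\sum_n (4-n)f_n = 8$ directly. Your version even yields the slightly sharper conclusion $f_3 \geq 8$, which is consistent with the three links that ultimately survive (octahedron, cuboctahedron, icosidodecahedron have $8$, $8$, and $20$ triangles respectively).
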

	\begin{proof}
		We use Euler's formula $V-E+F=2$, where $V$,$E$ and $F$ denote the number of vertices,edges and faces of G respectively, to lead to a contradiction.
		Since $G$ is a finite 4-valent graph on  $S^2$, $E = V*4/2$.
		Since $D$ is reduced, $G$ has no 1-gons.
		If $G$ has no 2-gons and 3-gons, then $ F \leq E*2/4 $. Hence
		$$ V-E+F \leq E/2-E+E/2 = 0 < 2 $$
		which is a contradiction.
	\end{proof}
	As a result of Lemma \ref{no bigon region} and Lemma \ref{3-gon exists}, if $L$ is a hyperbolic alternating knot or link with a reduced alternating diagram $D$ and two checkerboard surfaces of $D$ are totally geodesic, then $D$ has a 3-gon.
	\begin{Th} \label{right angle}
		Let K be a hyperbolic alternating knot or link.Let D be its reduced alternating diagram.If two checkerboard surfaces of D are totally geodesic in $S^{3}-K$, then they intersect each other at right angles.
	\end{Th}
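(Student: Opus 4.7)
By Lemmas \ref{no bigon region} and \ref{3-gon exists} the diagram $D$ contains a 3-gon, and I plan to use it to pin down the dihedral angle between $S_W$ and $S_B$ at each of its three crossing arcs; the same angular identity will then propagate from those arcs to every crossing arc via face adjacency in the checkerboard polyhedron.

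Fix a 3-gon face $F$ with edges $e_1,e_2,e_3$ (lifts of crossing arcs) and write $\theta_i\in(0,\pi)$ for the dihedral angle between $S_W$ and $S_B$ along $e_i$. Pick an ideal vertex $v$ of $F$ and pass to the upper half-space model with $v$ placed at $\infty$. Because both checkerboard surfaces are totally geodesic, each face of the polyhedron at $v$ lies in a vertical geodesic plane, so the vertex link at $v$ is a Euclidean quadrilateral whose four sides are the line segments cut from these vertical planes by a horosphere; the checkerboard colouring forces the four sides to alternate $W,B,W,B$. I claim the two opposite $W$-faces at $v$ lie in \emph{distinct} lifts of $S_W$: otherwise a single shared lift plane $P_W$ would contain all four edges of the polyhedron at $v$ (each such edge bounds one of the two $W$-faces), forcing the four vertices of the link to be collinear in the horosphere and thereby preventing the polyhedron from being 3-dimensional at $v$. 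Since $S_W$ is embedded, its distinct lifts are disjoint totally geodesic planes; two disjoint vertical planes sharing the ideal point $\infty$ must be Euclidean-parallel, so the two $W$-sides of the link are parallel. The same argument applies to the $B$-sides, so the vertex link is a Euclidean parallelogram.

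Consecutive interior angles of a parallelogram sum to $\pi$. The two edges of $F$ meeting at $v$ correspond to adjacent vertices of the link (separated by the $F$-side), so the two dihedral angles of the polyhedron along those edges sum to $\pi$. Applying this at the three ideal vertices of $F$ yields
\[
\theta_1+\theta_2 \;=\; \theta_2+\theta_3 \;=\; \theta_3+\theta_1 \;=\; \pi,
\]
whence $\theta_1=\theta_2=\theta_3=\pi/2$. The same parallelogram relation then propagates: at every vertex of every face $G$ of the polyhedron, consecutive edges of $G$ have dihedral angles summing to $\pi$, so once any single edge of $G$ has dihedral $\pi/2$ every edge of $G$ does. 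Starting from $F$ and marching through the connected face-adjacency graph of the polyhedron, right dihedral angles reach every crossing arc of $K$, as required. The principal delicate step is establishing the parallelogram structure of the vertex link---specifically the non-degeneracy argument forcing the two opposite same-colour sides off a single line---after which the angle chase and the propagation are routine.
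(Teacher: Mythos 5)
Your proposal is correct in substance, and while it shares the paper's overall strategy (use Lemmas \ref{no bigon region} and \ref{3-gon exists} to produce a 3-gon, exploit embeddedness of the surfaces to get parallelism of lifts near a cusp, then propagate), the way you extract the right angle from the 3-gon is genuinely different. The paper views the lifted ideal triangle $\Delta$ from one of its vertices placed at $\infty$: the two lifts of the other surface through the vertical edges appear as parallel lines through the endpoints of a segment, the lift through the third edge appears as a circle through both endpoints, and disjointness forces that circle to be tangent to both lines, i.e.\ to have the segment as a diameter --- a tangency argument concentrated at a single vertex. You instead show that the vertex link at every ideal vertex is a Euclidean parallelogram with sides alternating between the two colours, so consecutive dihedral angles sum to $\pi$; running this at all three vertices of the triangle gives $\theta_1+\theta_2=\theta_2+\theta_3=\theta_3+\theta_1=\pi$ and hence $\theta_i=\pi/2$. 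This is arguably cleaner, and it makes essential use of the oddness of $3$ (for an even face the system would only force alternation $\alpha,\pi-\alpha$), exactly where the paper uses the circle geometry. Your propagation via the connected face-adjacency graph of a single checkerboard polyhedron is also tidier than the paper's, which propagates through the horosphere quadrangulations cusp by cusp and needs non-splitness to reach every component of a link; your route covers all crossing arcs at once since every crossing arc is an edge of the polyhedron. The one soft spot is your non-degeneracy claim that the two same-colour faces at a vertex lie in distinct lifts: ``preventing the polyhedron from being 3-dimensional at $v$'' deserves justification (the paper itself only addresses the analogous degeneration in Section \ref{removeRegularity}, via the no-bigon result of Adams et al.\ and Thistlethwaite--Tsvietkova), but your argument is at the same level of rigour as the paper's own proof on this point.
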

	\begin{proof}
		If $K$ is a knot, let $N(K)$ be a horocusp of the knot complement $S^3-K$.
		Denote two checkerboard surfaces of $D$ by $S_1$ and $S_2$.
		Consider a horosphere $H$ in the preimage of $\partial N(K)$ in $\mathbb{H}^3$.
		Intersections of $H$ and the lifts of $S_i$ are parallel lines on $H$ since $S_i$ is embedded in the knot complement for $i={1,2}$.
		Therefore $S_1$ and $S_2$ intersect $H$ in a quadrangulation pattern; see Figure $\ref{quadragulation of horosphere}$.
		By Lemma \ref{no bigon region} and Lemma \ref{3-gon exists}, there is a checkerboard surface say $S_{1}$ of which defining region contains a projection 3-gon.
		The corresponding 3-gon in $S_1$ lifts and is isotopic to a totally geodesic ideal triangle $\Delta$ in $\mathbb{H}^{3}$ and lifts of $S_{2}$ intersects $\Delta$ at all its three edges(which are isotopic to lifts of three crossing arcs).
		Use the upper half space model, put a vertex of $\Delta$ at $\infty$ and take a bird view.
		$\Delta$ is the red segment in Figure \ref{explain}.
		Two blue lines are lifts of $S_2$ which are parallel since $S_2$ is embedded in the link complement.
		But lifts of $S_2$ also intersect $\Delta$ at the third edge.
		Hence there is a blue circle passing two endpoints of the red segment.
		If the intersection angles of $\Delta$ and $S_2$ are not right angles, this circle intersects two blue lines, contradicting the fact that $S_2$ is embedded.
		The only possibility is Figure \ref{explain2}.
		\begin{figure} 
			\begin{subfigure}{0.4\textwidth}
				\centering
				\includegraphics[width=.8\linewidth]{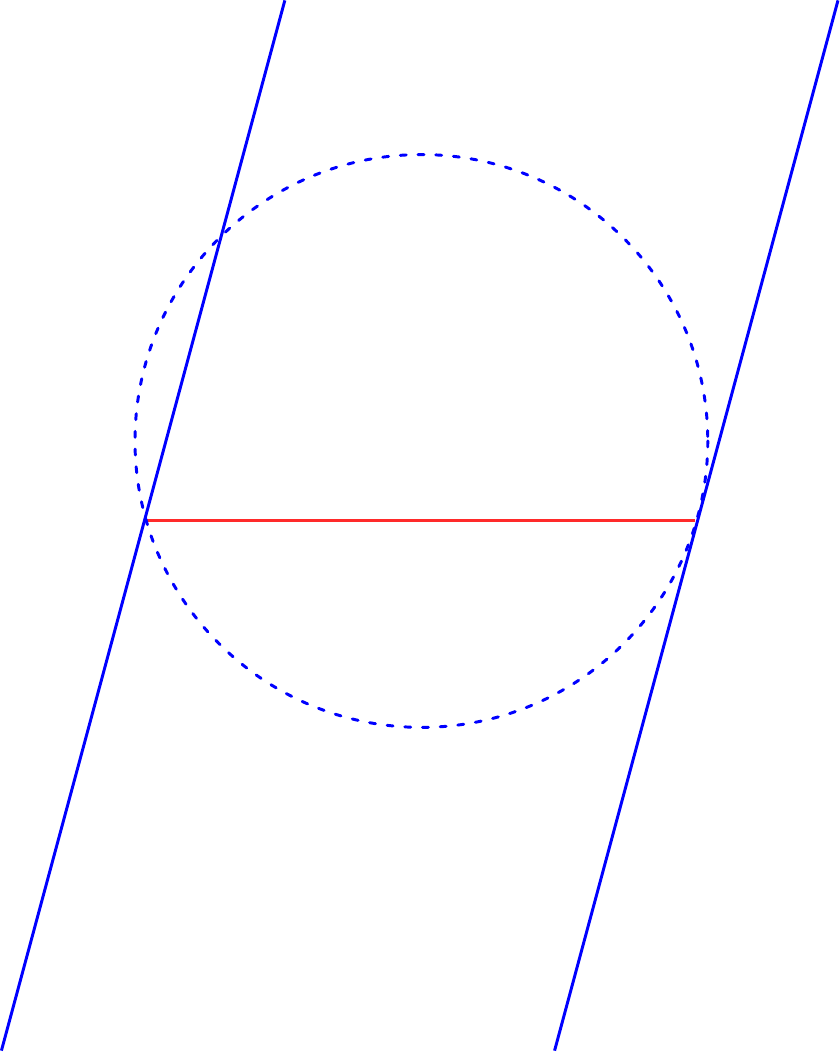}
				\caption{}
				\label{explain}
			\end{subfigure}%
			\begin{subfigure}{0.4\textwidth}
				\centering
				\includegraphics[width=.5\linewidth]{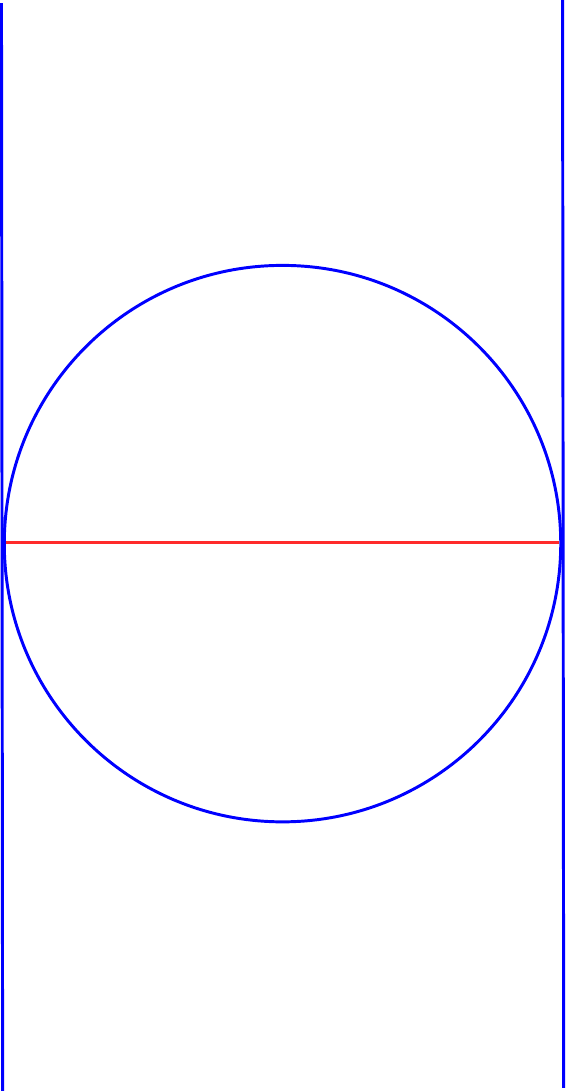}
				\caption{}
				\label{explain2}
			\end{subfigure}%
			\caption{}
		\end{figure}
		Hence $\tilde{S_{2}}$ intersects $\Delta$ at right angles and there is a right angle in the quadrangulation of $H$.
		This forces all angles in the quadrangulation to be right angles.
		Intersections of (topological) checkerboard surfaces at crossing arcs lift to intersections of totally geodesic planes $\tilde{S_{1}}$ and $\tilde{S_{2}}$ at geodesics.
	 	These geodesics connect centers of distinct horospheres which is lifts of $\partial N(K)$ in $\mathbb{H}^3$.
	 	The intersection angle can be read off from horospheres which is a right angle.
	 	Therefore two checkerboard surfaces intersect each other at right angles.
	 	
		If $K$ is a link.
		There is a 3-gon in $D$ by Lemma \ref{no bigon region} and Lemma \ref{3-gon exists}.
		Denote the collection of components of $K$ adjacent to this 3-gon as $\mathcal{C}$.
		Similar argument as above shows that for every component $C$ in $\mathcal{C}$, the quadrangulation on $\partial N(C)$ is in fact rectangulation.
		Because $L$ is hyperbolic, $L$ is non-split.
		An alternating link $L$ is not split iff its alternating diagram $D$ is not split(\cite{menasco1984closed},\cite[Theorem 4.2]{raymond1997introduction}).
		Hence every remaining component $C'$ of $L$ shares some crossing arc $\gamma$ with some component $C$ in $\mathcal{C}$.
		$\gamma$ lifts and is isotopic to a geodesic that connect two distinct horospheres which are lifts of $\partial N(C)$ and $\partial N(C')$ respectively.
		Then the rectangulation on the lift of $\partial N(C)$ forces a rectangulation on the lift of $\partial N(C')$.
		Thus quadrangulations on the boundaries of remaining horocusps are all rectangulations.
		Therefore two checkerboard surfaces intersect each other in right angle.
	\end{proof}
	\begin{figure} 
		\includegraphics[width=.6\linewidth]{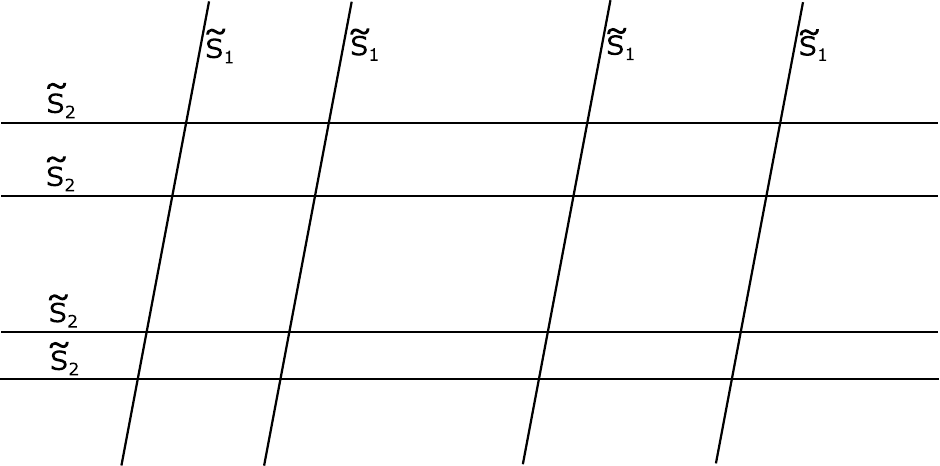}		
		\caption{quadrangulation on the horosphere}
		\label{quadragulation of horosphere}
	\end{figure}
	\begin{figure}
		\begin{subfigure}[t]{1\textwidth}
			\centering
			\includegraphics[width=.6\linewidth]{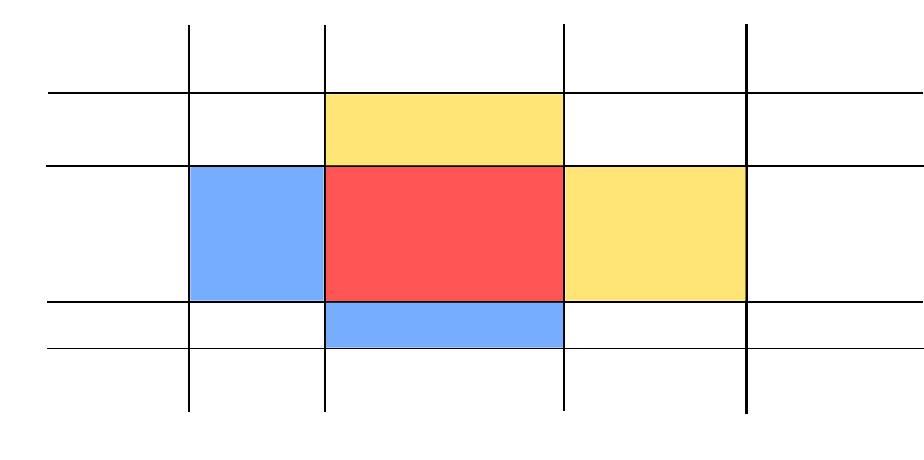}
			\caption{rectangulation on the horosphere}
			\label{cusp view}
		\end{subfigure}
		\begin{subfigure}[t]{1\textwidth}
			\centering
			\includegraphics[width=.6\linewidth]{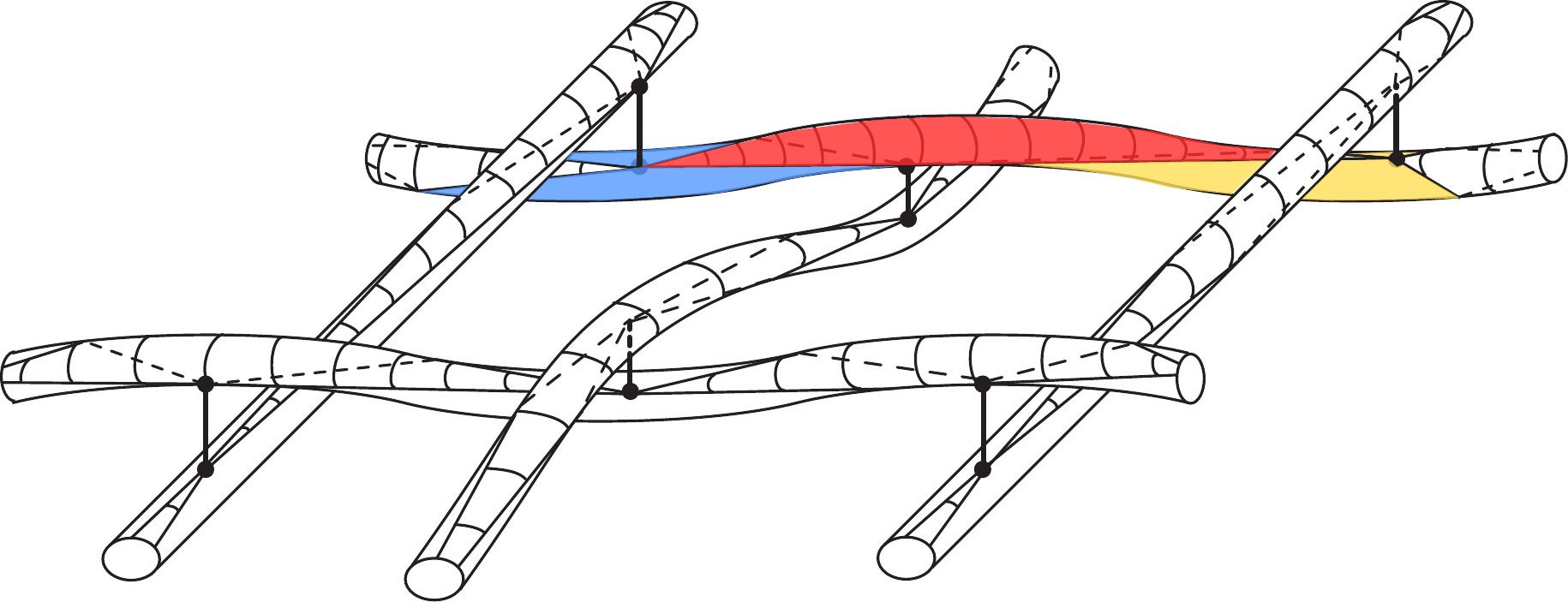}
			\caption{quadrangulation of $\partial N(K)$, figure modified from \cite{adams2006cusp}}
			\label{quadrangulation of knot neigh}
		\end{subfigure}
		\begin{subfigure}[t]{1\textwidth}
			\centering
			\includegraphics[width=.55\linewidth]{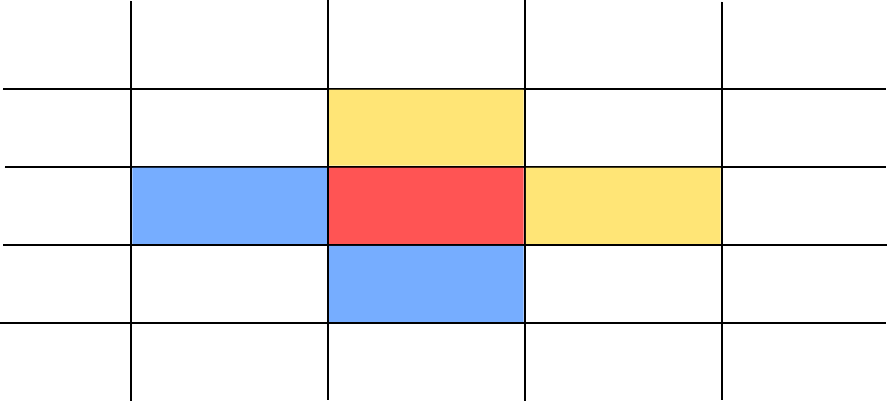}
			\caption{all rectangles are the same on the horosphere}
			\label{all rectangles same}
		\end{subfigure}%
		\caption{}
	\end{figure}
%
	\begin{Def}
		We call two checkerboard surfaces of the diagram of a knot or link $K$ \textit{black and white checkerboard surfaces}.
		Suppose they are totally geodesic in $S^3-K$, by the above proof, intersections of checkerboard surfaces and horocusps of $K$ give the boundaries of the horocups a rectangulation.
		We call edges of these rectangles coming from the black checkerboard surface \textit{horizontal segments}, and that coming from the white checkerboard surface \textit{vertical segments}.
	\end{Def}
	The proof of Theorem \ref{right angle} shows that two totally geodesic checkerboard surfaces give the boundaries of horocusps a rectangulation.
	Consider the rectangles on the boundary of a fixed horocusp $\partial N(C)$.
	One of the diagonals of these rectangles is the meridian of $\partial N(C)$ which is a geodesic on this Euclidean torus(see Figure \ref{cusp view} and Figure \ref{quadrangulation of knot neigh}).
	Hence these meridians all have equal length and all the diagonals of retangles have equal length.
	Since these rectangles share either a horizontal segment or a vertical segment, by Pythagoras theorem, they are all the same in the sense that after lifting them to the horosphere in the preimage of $\partial N(C)$, there is a Euclidean translation of the horosphere transforming one into another; see Figure $\ref{all rectangles same}$.
	Note rectangles in the boundaries of different horocusps might have different Euclidean structures up to similarity in general.
	As a result horizontal(vertical) segments in the same horocusp have equal length.

\subsection{A regularity condition}
	We define regular $n$-gon following \cite{aitchison2002archimedean}.
	\begin{Def} \label{def:regular}
		Let $F$ be a convex,planar,ideal $n$-gon in $\mathbb{H}^3$.
		We call $F$ \textit{regular} if $F$ is setwise invariant under some rotation of order n. 
	\end{Def}
	 Aitchison and Reeves prove the following lemma about regular $n$-gon\cite[Lemma 3.2]{aitchison2002archimedean}.
	\begin{Def}
		Let $p,q,r,s \in \mathbb{C}$, the \textit{cross ratio} of $p$,$q$,$r$ and $s$ is given by $R(p,q,r,s)=\frac{(p-r)(q-s)}{(p-s)(q-r)}$.
	\end{Def}
	\begin{Lemma}\label{4 points determine regular $n$-gon}
		F is regular iff for any four consecutive vertices $v_0$,$v_1$,$v_2$ and $v_3$ of $F$, the cross ratio of these points is given by $R(v_0,v_1,v_2,v_3)=1+\frac{1}{2cos(2\pi/n)+1}$.
	\end{Lemma}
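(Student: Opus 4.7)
The plan is to exploit two standard facts: the cross ratio is invariant under Möbius transformations of $\partial \mathbb{H}^3 = \mathbb{C}\cup\{\infty\}$, and a Möbius transformation is determined by its values at three distinct points (so, given three points and a target value of the cross ratio, the fourth point is uniquely determined). With these in hand, the strategy is to compute the cross ratio explicitly on one standard regular ideal $n$-gon, and then move any other configuration to this model.

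For the forward direction I would normalize. The order-$n$ rotational symmetry of $F$ is a hyperbolic isometry, hence a Möbius transformation of the plane; conjugating, we may place its axis vertically in the upper half-space model so that the rotation acts on $\partial\mathbb{H}^3$ as $z\mapsto\zeta z$ with $\zeta = e^{2\pi i/n}$, and scale so that the vertices become $v_k = \zeta^k$. Using the factorizations $1-\zeta^3 = (1-\zeta)(1+\zeta+\zeta^2)$ and $1-\zeta^2 = (1-\zeta)(1+\zeta)$ and simplifying, I expect to get
\[
R(1,\zeta,\zeta^2,\zeta^3)=\frac{(1-\zeta^2)(\zeta-\zeta^3)}{(1-\zeta^3)(\zeta-\zeta^2)}=\frac{(1+\zeta)^2}{1+\zeta+\zeta^2}=1+\frac{1}{\zeta+1+\zeta^{-1}}=1+\frac{1}{2\cos(2\pi/n)+1}.
\]
Since the rotation $z\mapsto\zeta z$ is a Möbius transformation cycling consecutive quadruples, Möbius invariance of the cross ratio gives the same value on every four consecutive vertices.

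For the converse, assume the stated cross-ratio identity holds for every four consecutive vertices of $F$. I would choose the unique Möbius transformation $\phi$ sending $v_0,v_1,v_2$ to $1,\zeta,\zeta^2$. Because a Möbius map is determined by three values, knowledge of $R(v_0,v_1,v_2,v_3)$ pins down $\phi(v_3)$ uniquely; the forward computation identifies this point as $\zeta^3$. Applying the same argument to the shifted quadruples $(v_{k-3},v_{k-2},v_{k-1},v_k)$ and inducting, I obtain $\phi(v_k)=\zeta^k$ for every $k$. Hence $\phi$ carries $F$ onto the roots-of-unity $n$-gon, and pulling back $z\mapsto \zeta z$ via $\phi$ yields a Möbius transformation of $\partial\mathbb{H}^3$ of order $n$ preserving $F$ setwise; such an element has a fixed axis in $\mathbb{H}^3$ and is therefore a hyperbolic rotation of order $n$, so $F$ is regular.

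The main obstacle is the bookkeeping in the cross-ratio computation, in particular recognising that the apparently complex expression $(1+\zeta)^2/(1+\zeta+\zeta^2)$ simplifies to the real quantity $1+1/(2\cos(2\pi/n)+1)$ after pulling out a factor of $\zeta$ from the denominator. Everything else is structural and follows from the invariance and rigidity of the cross ratio.
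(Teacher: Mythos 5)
This lemma is quoted in the paper from Aitchison--Reeves \cite[Lemma 3.2]{aitchison2002archimedean} and not proved there, so there is no internal proof to compare against; judged on its own, your argument is correct and is essentially the standard one. The normalization is legitimate (for $n\ge 3$ the order-$n$ rotation must have axis perpendicular to the plane of $F$, the invariant circle at infinity is then centred on the axis, and since the isometries preserving $F$ form a group you may replace the given symmetry by the rotation through exactly $2\pi/n$ in the cyclic group it generates, so the vertices really are the $n$-th roots of unity in consecutive order), the computation $R(1,\zeta,\zeta^2,\zeta^3)=(1+\zeta)^2/(1+\zeta+\zeta^2)=1+1/(2\cos(2\pi/n)+1)$ checks out with the paper's convention $R(p,q,r,s)=\frac{(p-r)(q-s)}{(p-s)(q-r)}$, and the converse correctly uses that the cross ratio is a Möbius function of its fourth argument, so three normalized vertices plus the prescribed value pin down the fourth; the induction then transports $z\mapsto\zeta z$ back to an order-$n$ rotation preserving $F$. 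The only detail worth adding is the degenerate case $n=3$ (where $v_3=v_0$ and both sides of the identity are $\infty$), which the paper explicitly folds into the statement.
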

	Here $F$ is sitting in the upper half space model of $\mathbb{H}^3$ and the ideal vertices of $F$ correspond to complex number in the complex plane.
	The cross ratio is calculated based on these complex number.
	Note the lemma works for 3-gons if we take $v_3=v_0$.
	As a result of the lemma, four consecutive vertices determine a regular $n$-gon uniquely including the integer $n$.
	\begin{Def}
		Let $K$ be a hyperbolic alternating knot or link with a reduced alternating diagram $D$.
		Suppose checkerboard surfaces of $D$ are totally geodesic in the complement of $K$.
		For every $n$-gon $G$ in $D$, $G$ is isotopic to some planar,ideal n-gon in a totally geodesic checkerboard surface $S$.
		In the universal cover, $S$ lifts to a set of geodesic planes and $G$ lifts to a set of planar,ideal n-gons in these planes.
		These n-gons relate to each other by covering transformations.
		We say that \textit{an $n$-gon $G$ in $D$ is regular} if any(hence all) $n$-gon in this set is regular in the sense of Definition \ref{def:regular}
	\end{Def}
	\begin{Lemma}\label{opposite regular are same $n$-gon}
		Let L be a hyperbolic alternating knot or link with a reduced alternating diagram $D$.
		Suppose that two checkerboard surfaces of D are totally geodesic in $S^3-L$.
		Let $G_n$,$G_m$($n,m\geq3$) be regular n-gon,m-gon in D respectively opposite to each other, then $n=m$.
	\end{Lemma}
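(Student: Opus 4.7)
The plan is to combine the cross-ratio characterization of regular ideal polygons in Lemma~\ref{4 points determine regular $n$-gon} with the rectangulation of horocusp boundaries from Theorem~\ref{right angle} to derive an equation $w^{2}=b_n$, where $w$ is the width of the rectangle at the shared cusp and $b_n$ is a function depending only on $n$ that is strictly increasing for $n\geq 3$; running the same argument for $G_m$ will then give $w^{2}=b_m$, forcing $n=m$.

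Working in the upper half-space model, I would lift a shared crossing of $G_n$ and $G_m$ to $\infty$, so that the horocusp at $\infty$ carries a rectangulation with rectangles of dimensions $w\times h$. The plane $\Pi_n$ containing $G_n$ is vertical over a horizontal line $\ell_n\subset\mathbb{C}$, and $\Pi_m$ is vertical over a parallel line $\ell_m$ at distance $h$. Normalize so that the two adjacent-to-$\infty$ vertices of $G_n$ are $v_1=0$ and $v_{n-1}=w$ on $\ell_n$, and those of $G_m$ are $u_1=ih$ and $u_{m-1}=w+ih$ on $\ell_m$. Applying Lemma~\ref{4 points determine regular $n$-gon} to the four consecutive vertices $v_{n-1},\infty,v_1,v_2$ of $G_n$, a direct cross-ratio computation gives $v_2=w/b_n$ where $b_n:=2+2\cos(2\pi/n)=4\cos^{2}(\pi/n)$, a quantity strictly increasing from $b_3=1$ to $b_{\infty}=4$.

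Next I would invoke the $n$-fold rotational symmetry of $G_n$ in $\Pi_n$. This symmetry extends to an isometry $\tilde{\rho}$ of $\mathbb{H}^{3}$ (rotation about the geodesic perpendicular to $\Pi_n$ through the centroid of $G_n$) that cyclically permutes the ideal vertices of $G_n$ and sends $\infty$ to $v_1$. Consequently the rectangular link at $v_1$ is isometric to that at $\infty$, and in particular the black-edge length (parallel to $\Pi_n$) at $v_1$ equals $w$. Applying the isometry $\phi(z)=1/z$ of $\mathbb{H}^{3}$, which sends $v_1=0$ to $\infty$ and preserves $\Pi_n$, transforms $G_n$ into a regular $n$-gon in $\Pi_n$ with vertex at $\infty$ and two new adjacent vertices at $0$ and $\phi(v_2)=b_n/w$; the black-edge equality then reads $b_n/w=w$, i.e., $w^{2}=b_n$. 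The identical argument applied to $G_m$ via its $m$-fold symmetry yields $w^{2}=b_m$, and by strict monotonicity of $b_n$ we conclude $n=m$.

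The hard part will be justifying the symmetry-based equality of black-edge length at $v_1$ and at $\infty$ when $L$ is a link: distinct ideal vertices of $G_n$ may lie in distinct cusps of $L$, whose horocusps are a priori independent choices. I would address this either by rescaling horocusps so that the lift at $v_1$ coincides with $\tilde{\rho}$ applied to the chosen horoball at $\infty$, or by replacing the metric statement ``black-edge length $=w$'' by the M\"obius-invariant statement that the cross ratio of the four ideal vertices adjacent to $v_1$ matches that of the four adjacent to $\infty$, thereby sidestepping any choice of horocusp.
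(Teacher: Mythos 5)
There is a genuine gap, and it sits exactly where you predicted: the equation $w^{2}=b_n$. Your derivation needs the intrinsic width of the rectangle at the vertex $v_1$ (measured on the horoball $\tilde{\rho}(H_\infty)$) to equal the intrinsic width $w$ at $\infty$ (measured on the chosen horoball $H_\infty$). But the only comparison of rectangle widths available from the discussion after Theorem \ref{right angle} is for rectangles cut out on the boundary of a \emph{single} horocusp, i.e.\ on one horosphere or on two horospheres related by a deck transformation. The rotation $\tilde{\rho}$ is a symmetry of the single face $G_n$, not of $S^{3}-L$; it is in general not a covering transformation, so $\tilde{\rho}(H_\infty)$ need not be a lift of the horocusp at all, and the number $b_n/w$ you read off after inverting has no a priori relation to the rectangulation. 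This is a problem already for knots, not only for links. Neither proposed fix closes it: rescaling the horoball at $v_1$ to be $\tilde{\rho}(H_\infty)$ changes the very widths whose equality you need (and cannot be done independently at each lift of one cusp), while the M\"obius-invariant reformulation runs into the fact, noted explicitly in the paper, that rectangles on the boundaries of different horocusps need not even be similar, so there is no shared ``shape'' at $\infty$ and $v_1$ to invoke. Indeed $w$ scales with the choice of horoball while $b_n$ does not, so $w^{2}=b_n$ could only hold for one special normalization that you have not produced. A secondary issue: opposite faces lie in the \emph{same} checkerboard surface and are glued along the crossing arc of the shared crossing, so their lifts are adjacent polygons in a single geodesic plane sharing that geodesic edge and both of its ideal endpoints --- not polygons in parallel planes at distance $h$.

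The paper's proof avoids every cross-horosphere comparison by exploiting precisely this shared edge. Putting its two ideal endpoints at $\infty$ and $O$, the horizontal segments cut by $\tilde{G}_n$ and by $\tilde{G}_m$ on the horosphere at $\infty$ lie on the \emph{same} horosphere and hence have equal length, and likewise at $O$; this forces the neighbouring vertices $E,F$ of $G_n$ and $E',F'$ of $G_m$ into mirror position about $O$, so $R(E,\infty,O,F)=R(E',\infty,O,F')$, and Lemma \ref{4 points determine regular $n$-gon} immediately gives $n=m$. If you want to salvage your computation, the repair is to make the same move: compare the two regular polygons only across their common edge, where all lengths are measured one horosphere at a time, rather than trying to extract an absolute relation $w^{2}=b_n$ from a single polygon.
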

	\begin{proof}
		$G_n$ and $G_m$ are in a common totally geodesic checkerboard surface, say the black checkerboard surface $S_b$.
		Lift them to $\mathbb{H}^3$, we have adjacent totally geodesic regular ideal $n$-gon and m-gon within a translate of the lifts of $S_b$(see Figure \ref{opposite gon same}).
		$QA$,$QA'$ are horizontal segments on the same horosphere, and $PD$,$PD'$ are horizontal segments on the same horosphere.
		By the discussion after Theorem \ref{right angle}, $QA=QA'$ and $PD=PD'$.
		As a result, points $E,\infty,O,F$ and $E',\infty,O,F'$ are in symmetric position.
		Note if $n$ or $m = 3$, then $F=E$ or $F'=E'$.
		After applying some isometry fixing $\infty$ in $\mathbb{H}$, one can assume that $E=z_1, E'=-z_1, O=0, F=z_2$ and $ F'=-z_2$ where $z_1$ and $z_2$ are some complex numbers.
		Denote the cross ratios of four consecutive vertices of $G_n$ and $G_m$ by $R(E,\infty,O,F)$ and $R(E',\infty,O,F')$ respectively.
		We have $R(E,\infty,O,F) = R(z_1,\infty,0,z_2) = R(-z_1,\infty,0,-z_2) = R(E', \infty,O,F')$.
		Since $G_n$ and $G_m$ are regular, by Lemma \ref{4 points determine regular $n$-gon} $n=m$.
	\end{proof}
	\begin{figure} 
		\includegraphics[width=.8\linewidth]{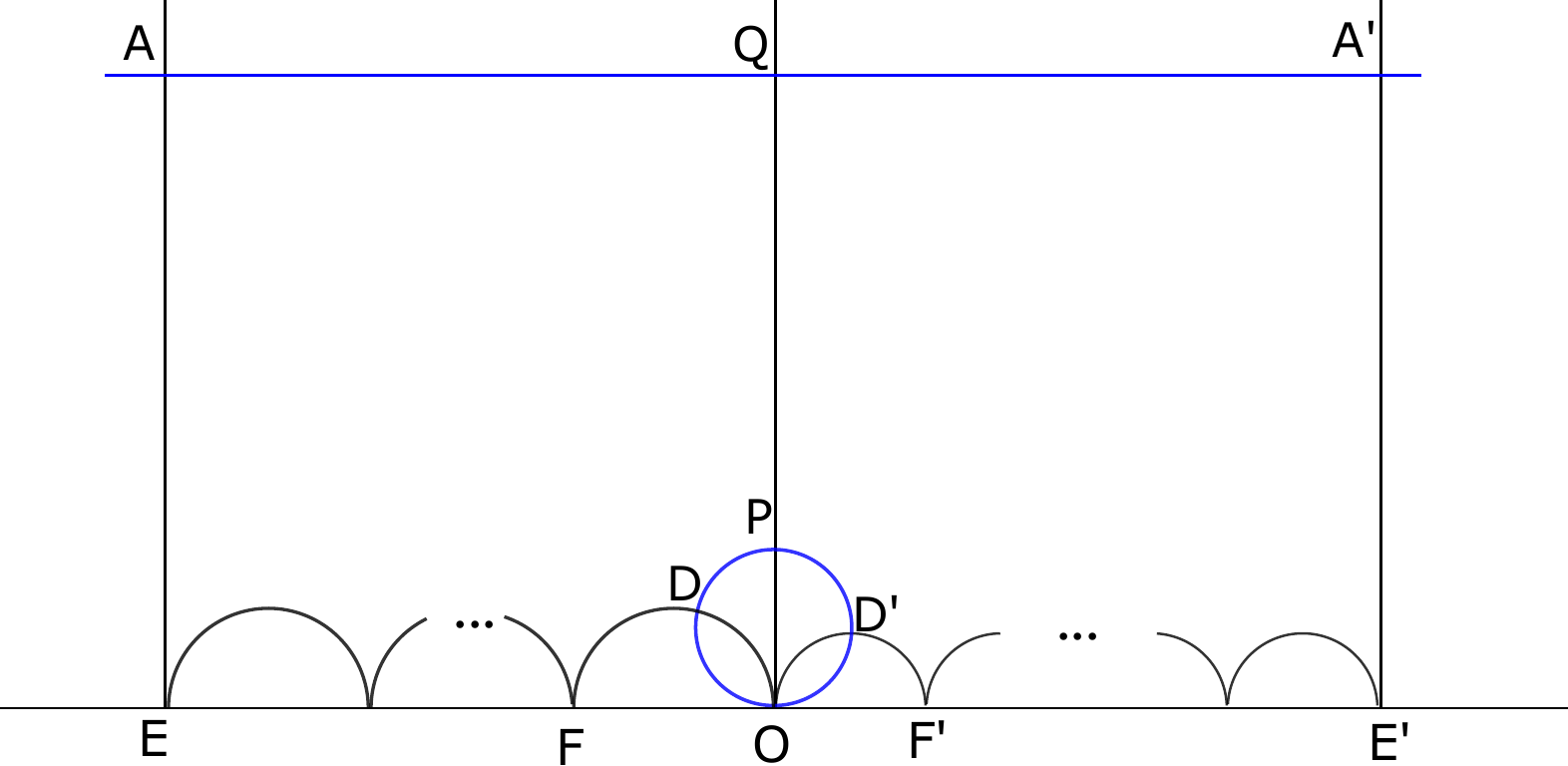}
		\caption{$n$-gon and m-gon in a totally geodesic plane. The m-gon and n-gon appear in black while the horospheres appear in blue.}
		\label{opposite gon same}
	\end{figure}
	\begin{Rem} \label{regular $n$-gon opposite to 3-gon is 3-gon}
		Since ideal 3-gon is regular, if there is a 3-gon in $D$ of the above lemma, then any regular $n$-gon in $D$ opposite to this 3-gon is a 3-gon.
	\end{Rem}
	\begin{Prop} \label{regular imply no}
		Let L be a hyperbolic alternating knot or link with a reduced alternating diagram $D$.
		If two checkerboard surfaces of D are totally geodesic in $S^3-L$ and every $n$-gon in D is regular, then L is one of the links in Figure \ref{threediagrams}.
	\end{Prop}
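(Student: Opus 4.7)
The plan is to propagate the ``opposite of a 3-gon must be a 3-gon'' statement of Remark~\ref{regular $n$-gon opposite to 3-gon is 3-gon} through the entire diagram and similarly propagate Lemma~\ref{opposite regular are same $n$-gon} on the other color class, and then apply Euler's formula to pin down the few possibilities.

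By Lemma~\ref{no bigon region} and Lemma~\ref{3-gon exists}, $D$ contains a 3-gon, which without loss of generality I take to be a black region. Consider the ``black region graph'' $\mathcal{G}_b$ whose vertices are the black regions of $D$ and in which each crossing of $D$ contributes an edge joining the two black regions that are opposite at that crossing; this is the black graph of the checkerboard decomposition. Since $L$ is hyperbolic it is non-split, hence by Menasco $D$ is a non-split (and in particular connected) diagram, so $\mathcal{G}_b$ is connected. Every region of $D$ is regular by hypothesis, so Remark~\ref{regular $n$-gon opposite to 3-gon is 3-gon} applies at every crossing: along any edge of $\mathcal{G}_b$, if one endpoint is a 3-gon then so is the other. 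Propagating from the initial 3-gon, I conclude that every black region of $D$ is a 3-gon. An entirely parallel argument, this time using Lemma~\ref{opposite regular are same $n$-gon} in place of the Remark, produces an integer $n \geq 3$ such that every white region of $D$ is an $n$-gon.

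It remains to count and identify. Let $V, E, F_3, F_n$ denote, respectively, the numbers of crossings, edges, black regions and white regions of $D$. Four-valence gives $E = 2V$, and since each edge lies on the boundary of exactly one black and one white region, $3 F_3 = E = n F_n$. Substituting into Euler's formula $V - E + F_3 + F_n = 2$ yields
$$ V\left(\frac{2}{n} - \frac{1}{3}\right) = 2, $$
which forces $n \in \{3,4,5\}$ with $(V,E,F_3,F_n) = (6,12,4,4)$, $(12,24,8,6)$ and $(30,60,20,12)$ respectively. These are the face vectors of the octahedron, the cuboctahedron and the icosidodecahedron. I expect the main remaining obstacle is verifying that in each case the combinatorial type of $D$ as a 4-valent plane graph is rigidly determined by this data; the alternation of colors at every vertex together with the fixed face sizes leaves essentially no freedom, and a direct case analysis (building the graph outward from any single face, and using that the two faces of a given color adjacent at a vertex are congruent polygons) identifies $D$ uniquely with the relevant 1-skeleton, whence $L$ is one of the three alternating links of Figure~\ref{threediagrams}.
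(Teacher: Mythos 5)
Your proposal is correct and follows essentially the same route as the paper: use Lemmas \ref{no bigon region} and \ref{3-gon exists} to find a 3-gon, propagate regularity via Lemma \ref{opposite regular are same $n$-gon} and Remark \ref{regular $n$-gon opposite to 3-gon is 3-gon} to make one color class all 3-gons and the other all $n$-gons, then apply Euler's formula to force $n\in\{3,4,5\}$ and identify the three quasi-regular 1-skeleta. Your version is in fact slightly more explicit than the paper's on two points the paper glosses over --- the connectivity argument needed to propagate face sizes across the whole diagram, and the final combinatorial rigidity of the resulting 4-valent plane graphs --- but these are elaborations of the same argument, not a different approach.
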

	\begin{proof}
		In the following, we consider knots as links with one component.
		Since every $n$-gon in $D$ is regular, by Lemma \ref{opposite regular are same $n$-gon}, opposite $n$-gons have the same number of sides.
		Therefore each checkerboard surface consists of $n$-gons that have the same number of sides.
		
		Since $D$ is reduced, there is no 1-gons.
		Since we assume that two checkerboard surfaces of $D$ are totally geodesic in the knot complement, by Lemma \ref{no bigon region}, there is no 2-gons in $D$.
		By Lemma \ref{3-gon exists}, $D$ has a 3-gon.
		Hence one of the checkerboard surfaces consists of 3-gons.
		We only need to decide what $n$-gon the other checkerboard surface consist of.
		Since there is no 1-gons or 2-gons, $n \geq 3$.
		Denote the projection graph by $G$.
		Let $V$,$E$ and $F$ be the number of vertices,edges and faces of $G$ respectively.
		Since the graph is 4-valent, $4V/2=E$. Let $F_3$ and $F_n$ be the number of 3-gons and $n$-gons respectively, then $F_3 +F_n =F$.
		By the above argument, $F_3=2V/3$ and $F_n=2V/n$. Finally by Euler's formula $V-E+F=2$. We have
		$$ 2 = V-E+F = V-E+F_3+F_n = V-2V+ \frac{2V}{3} + \frac{2V}{n} = V(\frac{2}{n}-\frac{1}{3}) > 0 $$
		Therefore $3 \leq n \leq 5$.
		
		If $n=3$, $G$ is the octahedron graph.
		If $n=4$, $G$ is the cuboctahedron graph.
		If $n=5$, $G$ is the icosidodecahedron graph.
		Turning these graphs into alternating diagrams obtains links in Figure \ref{threediagrams}.
		Note either way to alternate the graphs results the same link by a symmetry for each diagram.
	\end{proof}
	The regularity condition in the above theorem can be dropped.
	See Section \ref{removeRegularity}.
	\begin{Rem}\label{three links totally geodesic}
		Thurston describes the complete hyperbolic structure of the Borromean ring in \cite{thurston1979geometry}.
		Hatcher describes the checkerboard decomposition of the link in Figure \ref{cuboct} and proves the links in Figure \ref{octahedron} and Figure \ref{cuboct} are arithmetic\cite{hatcher1983hyperbolic}.
		Adams notices that links in Figure $\ref{threediagrams}$ have both checkerboard surfaces totally geodesic and two checkerboard surfaces intersect each other at right angles\cite[Example 1.12]{adams2007noncompact}.
		Also see \cite[Example 2.5]{adams2008totally} for a proof that both checkerboard surfaces of these links are totally geodesic.
	\end{Rem}
\begin{figure} 
	\begin{subfigure}[t]{.3\textwidth}
		\centering
		\includegraphics[width=.8\linewidth]{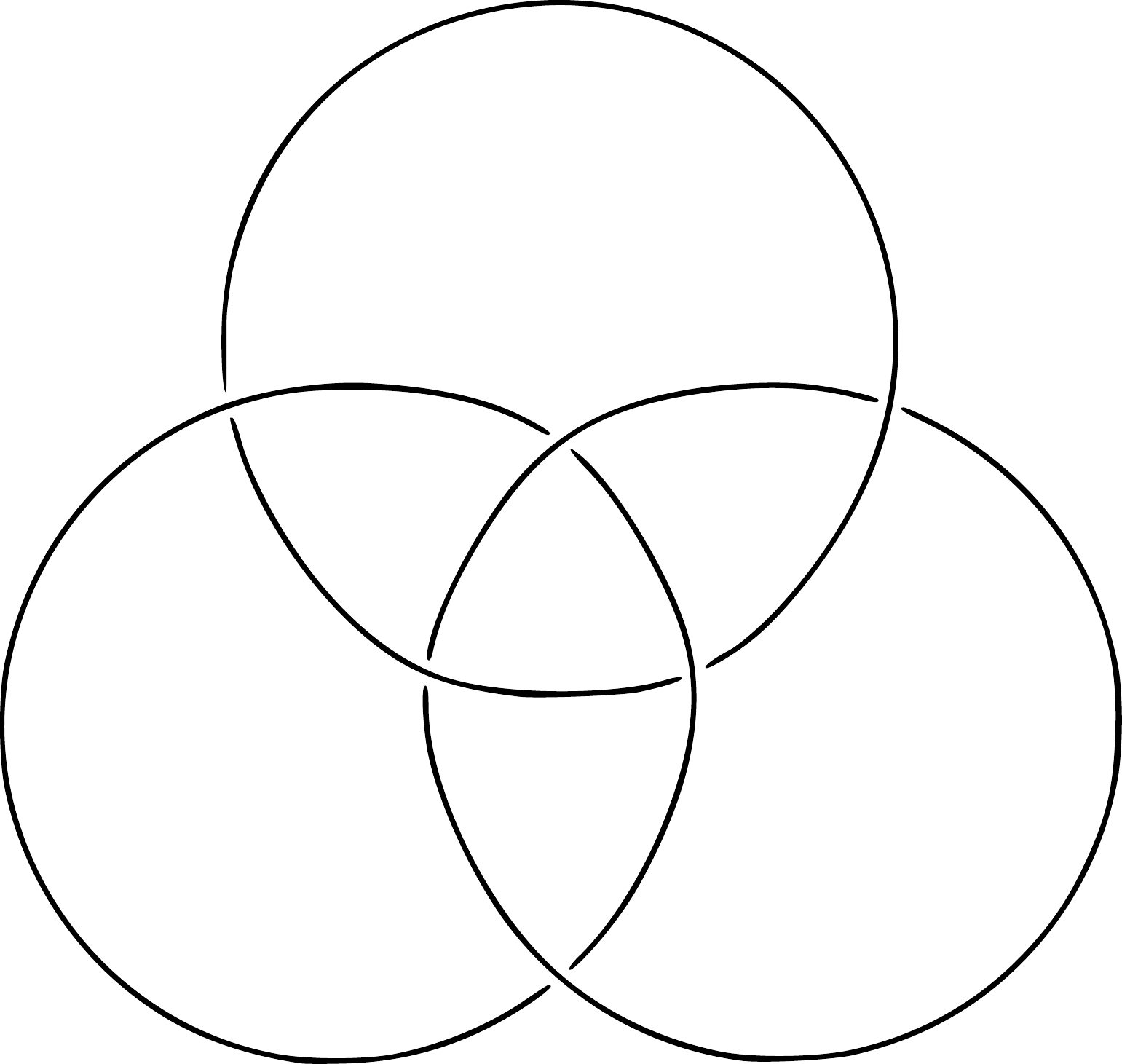}
		\caption{}
		\label{octahedron}
	\end{subfigure}
	\begin{subfigure}[t]{.3\textwidth}
		\centering
		\includegraphics[width=.8\linewidth]{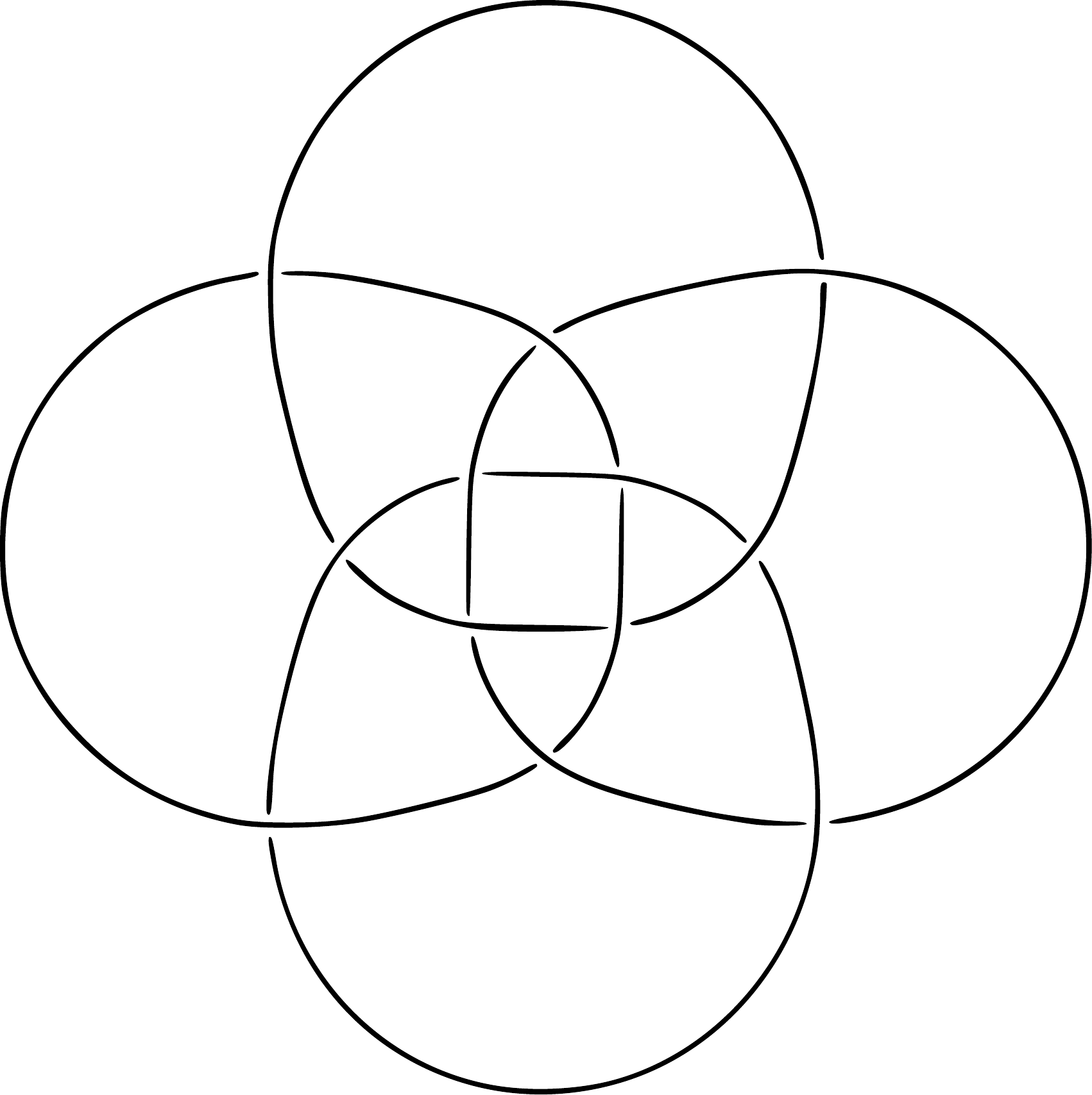}
		\caption{}
		\label{cuboct}
	\end{subfigure}
	\begin{subfigure}[t]{.3\textwidth}
		\centering
		\includegraphics[width=.8\linewidth]{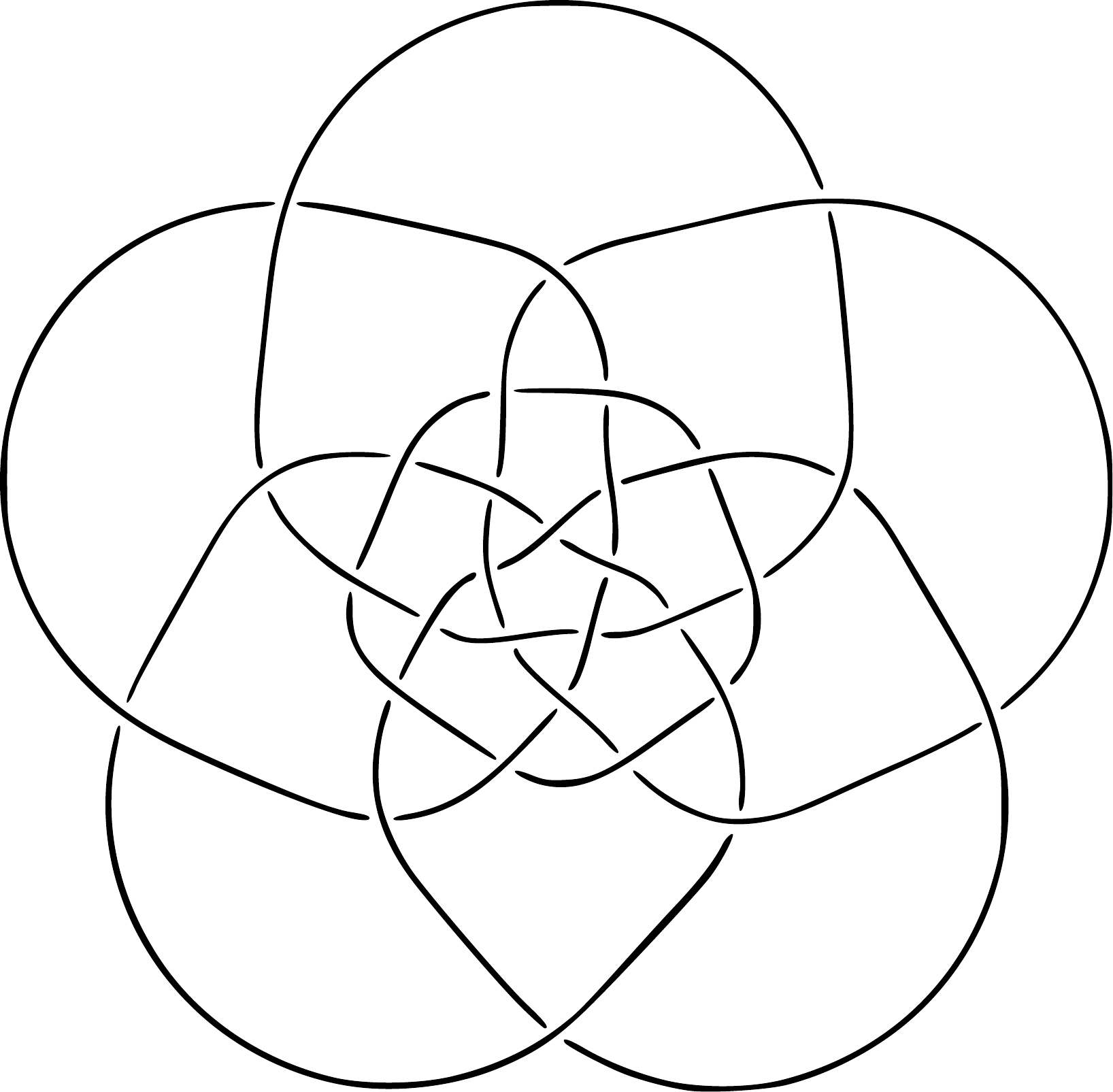}
		\caption{}
		\label{icosido}
	\end{subfigure}
	\caption{figures modified from Knotilus\cite{flint2006knotilus}}
	\label{threediagrams}
\end{figure}
\subsection{Right-angled completely realisable link}
	\begin{Def}
		We call an hyperbolic alternating link $L$ given by its reduced alternating diagram $D$ \textit{completely realisable} if two checkerboard polyhedra of $L$ associated to $D$ can be realised directly as ideal hyperbolic polyhedra and they can be glued together to give the complete hyperbolic structure of $S^3-L$.
		If in addition the polyhedra are right-angled, we call the link \textit{right-angled completely realisable}.
		Note right-angled ideal hyperbolic polyhedron is necessarily 4-valent.
		An ideal hyperbolic polyhedron is called \textit{regular-faced} if every face is regular.
		We say a combinatorial polyhedron $P$ \textit{regular faced realisable} if there is a regular-faced ideal hyperbolic polyhedron having the same combinatorial structure as $P$.
		If in addition all the face normals in the realisation intersect at some point, we call it \textit{simultaneously regular faced realisable} .
	\end{Def}
	\begin{Th} \label{completely realisable right-angled link}
		An alternating knot or link $L$ given by its reduced alternating diagram $D$ is right-angled completely realisable iff it is one of the links in Figure \ref{threediagrams}.
		In particular, there is no right-angled completely realisable knot.
	\end{Th}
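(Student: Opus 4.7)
The plan is to prove both implications of the biconditional and then deduce the last sentence.

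For the forward direction, I would assume $L$ is right-angled completely realisable: its checkerboard polyhedra are realised as right-angled ideal hyperbolic polyhedra $P_1,P_2$ whose prescribed gluing produces the complete hyperbolic structure on $S^3-L$. Each face of $P_i$ is then a totally geodesic ideal polygon, and the dihedral angle on either side of every face equals $\pi/2$. When a face of $P_1$ is glued to its counterpart in $P_2$ the angles on the two sides sum to $\pi$, so the identified face sits flat in $S^3-L$. Around each edge (lift of a crossing arc) four sectors from $P_1$ and $P_2$ meet, colored black-white-black-white, because at every edge of a checkerboard-colored polyhedron the two adjacent faces have opposite colors and same-colored faces of $P_1$ and $P_2$ are glued together; each sector has angle $\pi/2$. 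The two black sectors therefore occupy antipodal angular positions, so the two black half-planes bounding the black surface through the edge lie on a common geodesic plane, and the black checkerboard surface is totally geodesic; the same holds for the white one. By Theorem \ref{main main results} of Section \ref{removeRegularity}, the regularity-free strengthening of Proposition \ref{regular imply no}, $L$ is then forced to be one of the links in Figure \ref{threediagrams}.

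For the reverse direction, suppose $L$ is one of the three links in Figure \ref{threediagrams}. By Remark \ref{three links totally geodesic} both checkerboard surfaces of $L$ are totally geodesic, and by Theorem \ref{right angle} they intersect orthogonally along crossing arcs. Cutting $S^3-L$ along these two totally geodesic surfaces and collapsing the link strands to ideal vertices yields two pieces whose boundary consists of totally geodesic ideal polygons meeting at dihedral angle $\pi/2$; these are right-angled ideal hyperbolic polyhedra whose combinatorial type agrees with the checkerboard polyhedra of $L$, and reassembling them recovers the complete hyperbolic structure. Hence $L$ is right-angled completely realisable.

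The ``in particular'' statement follows by inspection of Figure \ref{threediagrams}: each of the three links has more than one component, so no knot appears on the list, and therefore no knot is right-angled completely realisable.

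The main obstacle is the forward direction: tracking the cyclic arrangement of sectors around each crossing arc and verifying that the right-angle hypothesis forces the two same-colored half-planes to be antipodal, which is what delivers totally geodesic checkerboard surfaces. Once this is in place, the argument relies essentially on Theorem \ref{main main results} of Section \ref{removeRegularity}; without that regularity-free strengthening one would need to separately prove that all faces of a right-angled completely realisable polyhedron are regular in order to invoke Proposition \ref{regular imply no} directly.
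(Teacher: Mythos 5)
Your forward direction has a circularity problem. You invoke Theorem \ref{main main results} to conclude that $L$ is one of the three links, but in the paper Theorem \ref{main main results} is itself deduced from Theorem \ref{completely realisable right-angled link}: the entire content of Section \ref{removeRegularity} is to show that a link with two totally geodesic checkerboard surfaces is right-angled completely realisable and then to apply Theorem \ref{completely realisable right-angled link}. So Theorem \ref{main main results} is not available when proving Theorem \ref{completely realisable right-angled link}, and the route you flag only as a fallback --- proving that every face of the realising polyhedra is regular and then citing Proposition \ref{regular imply no} --- is in fact the only non-circular option and is exactly what the paper does. The mechanism is worth recording: $\mathcal{P}^{+}$ and $\mathcal{P}^{-}$ are convex ideal polyhedra that are combinatorial mirror images with all dihedral angles $\pi/2$, so by Rivin's rigidity theorem (a convex ideal polyhedron is determined by its dihedral angles) they are isometric via a reflection; since the gluing carries each face onto its isometric mirror partner rotated by one edge, every face is setwise invariant under a rotation of order $n$, i.e.\ regular in the sense of Definition \ref{def:regular}. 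Your sector count around each crossing arc showing that the checkerboard surfaces are totally geodesic is fine and matches the paper's pleating-angle argument, but without the regularity step the forward direction does not close.

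For the reverse direction you take a genuinely different route: you start from the known fact (Remark \ref{three links totally geodesic}) that the three links have totally geodesic checkerboard surfaces, apply Theorem \ref{right angle}, and cut along them. This can be made to work, but it silently relies on the technical content of Section \ref{removeRegularity} (convexity of the resulting pieces, non-degeneration of faces and edges under the straightening isotopy), and it outsources the essential geometric input to Adams's earlier results. The paper instead constructs the right-angled realisation directly: the octahedron, cuboctahedron and icosidodecahedron are inscribable in the unit sphere with all faces regular, so the Klein model yields an ideal realisation whose vertex links are rectangles by symmetry, forcing all dihedral angles to be $\pi/2$; completeness of the glued structure is Corollary 5.3 of Aitchison--Reeves. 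The paper's version is self-contained, which is what makes the equivalence an honest one rather than a repackaging of known total geodesicity.
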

	\begin{proof}
		If the alternating link $L$ is right-angled completely realisable, the link complement $S^3-L$ can be obtained by gluing two right-angled ideal hyperbolic polyhedra $\mathcal{P}^{+}$ and $\mathcal{P}^{-}$.
		Such polyhedron is the intersection of half spaces of $\mathbb{H}^3$, hence $\mathcal{P}^{+}$ and $\mathcal{P}^{-}$ are convex.
		The underlying combinatorial polyhedra of $\mathcal{P}^{+}$ and $\mathcal{P}^{-}$ are mirror image of each other.
		Rivin shows that a convex ideal hyperbolic polyhedron is uniquely determined by its dihedral angles\cite[Theorem 14.1]{rivin1994euclidean}.
		Since the corresponding edges of $\mathcal{P}^{+}$ and $\mathcal{P}^{-}$ have the same dihedral angle $\pi/2$, $\mathcal{P}^{+}$ and $\mathcal{P}^{-}$ are isometric by a reflection.
		Therefore corresponding faces of $\mathcal{P}^{+}$ and $\mathcal{P}^{-}$ are isometric ideal polygons which is glued by a "gear rotation".
		The gluing rotates the faces by one edge, hence all faces of $\mathcal{P}^{+}$ and $\mathcal{P}^{-}$ are regular and every $n$-gon in $D$ is regular.
		Each checkerboard surface of $D$ is obtained by attaching totally geodesic polygonal faces of the polyhedra along edges of the polyhedra.
		This gluing gives the checkerboard surface a pleating.
		Since the polyhedra are right-angled, the pleating angle at each edge is $\pi$.
		Hence there is no bending and the resultant surface is totally geodesic.
		It follows that two checkerboard surfaces of $D$ are totally geodesic in $S^3-L$.
		By Proposition \ref{regular imply no}, $L$ is one of the links in Figure \ref{threediagrams}.
		
		If $L$ is one of the links in Figure \ref{threediagrams}, let $P$ be its checkerboard polyhedron of the diagram.
		$P$ is a combinatorial octahedron,cuboctahedron or icosidodecahedron.
		Euclidean octahedron,cuboctahedron and icosidodecahedron are simultaneously regular faced and inscribable in the unit sphere.
		By considering the unit ball as the Klein model, $P$ is simultaneously regular faced realisable.
		Denote this realisation by $\mathcal{P}$.
		At each ideal vertex $v$ of $\mathcal{P}$, there is a (sufficiently small) horosphere $H$ intersects $\mathcal{P}$ in a quadrilateral.
		There are two reflection symmetries of $\mathcal{P}$ fixing $v$ and $H$.
		The reflection plane of one symmetry transverses two non-adjacent faces incident at $v$ and the reflection plane of the other symmetry transverses the other two non-adjacent faces incident at $v$.
		Hence two reflections preserve the quadrilateral and the quadrilateral is a rectangle.
		It follows that every dihedral angle of $\mathcal{P}$ is $\pi/2$.
		Glue $\mathcal{P}$ and its mirror image as the checkerboard decomposition, we obtain a hyperbolic structure on $S^3-L$.
		By Corollary 5.3 in \cite{aitchison2002archimedean}, this hyperbolic structure is complete.
		Hence $L$ is right-angled completely realisable.
	\end{proof}
	\begin{Rem} \label{resolveCKP}
		By Theorem 5.4 in \cite{champanerkar2019right}, if $L$ is one of the above three links, right-angled volume of $L$ is equal to twice of the volume of its checkerboard polyhedron with a right-angled ideal hyperbolic structure.
		Hence right-angled volume of $L$ is equal to its hyperbolic volume by the above theorem, which gives two more examples to the question of Champanerkar, Kofman and Purcell \textit{Does there exist a hyperbolic alternating link $L$, besides the Borromean link, for which $vol^{\perp}(L) = vol(L)$?}\cite{champanerkar2019right}.
		
		Champanerkar, Kofman and Purcell conjecture that there does not exist a right-angled knot\cite{champanerkar2019right}.
		That is there is no hyperbolic knot $K$ of which the complement $S^3-K$ with the complete hyperbolic structure admits a decomposition into ideal hyperbolic right-angled polyhedra.
		Theorem \ref{completely realisable right-angled link} resolves a case of the conjectue.
	\end{Rem}

\subsection{Removal of the regularity condition} \label{removeRegularity}
	Let $L$ be a hyperbolic alternating knot or link given by a reduced alternating diagram $D$.
	Suppose two checkerboard surfaces of $D$ are totally geodesic in $S^3-L$.
	By cutting along two totally geodesic checkerboard surfaces we obtain two checkerboard polyhedra $P^{+}$ and $P^{-}$ which are topological balls.
	They lift to embed in $\mathbb{H}^3$ as 3-balls.
	Consider a fundamental domain of these balls which are two 3-balls $\mathcal{P}^{+}$ and $\mathcal{P}^{-}$.
	Crossing arcs lift and are isotopic to geodesics in $\mathbb{H}^{3}$.
	None of the faces of checkerboard polyhedra degenerates while lifting by Proposition 2.1 in \cite{thistlethwaite2014alternative}, that is n arcs on $L$ of an n-gon in $D$ lifts to n distinct vertices in $\mathbb{H}^3$.
	Hence by assumptions the faces lift to geodesic ideal n-gons which are faces of $\mathcal{P}^{+}$ and $\mathcal{P}^{-}$.
	The dihedral angles of $\mathcal{P}^{+}$ and $\mathcal{P}^{-}$ are right angles by Theorem \ref{right angle}.
	$\mathcal{P}^{+}$ and $\mathcal{P}^{-}$ are convex since otherwise some geodesic plane which contains some face of  $\mathcal{P}^{+}$(respectively $\mathcal{P}^{-}$) would cut through $\mathcal{P}^{+}$(respectively $\mathcal{P}^{-}$), contradicting the fact that the totally geodesic checkerboard surfaces are already cut away.
	The balls $\mathcal{P}^{+}$ and $\mathcal{P}^{-}$ remain homeomorphic to a ball under the isotopy making faces geodesic, unless two distinct topological edges on the ball are isotopic in $\mathbb{H}^{3}$.
	But this would imply that the complement of the checkerboard surface had a bigon, contradicting Theorem 3.1 in \cite{adams2008totally}.
	It follows that  $\mathcal{P}^{+}$ and $\mathcal{P}^{-}$ are convex right-angled polyhedra with the same combinatorial structure as the checkerboard polyhedra.
	The gluing is determined by the topology which is the same as the checkerboard decomposition.
	Hence $L$ is right-angled completely realisable.
	By Theorem \ref{completely realisable right-angled link}, $L$ is one of the links in Figure \ref{threediagrams}.
	We have
	\begin{Th} \label{main main results}
		Let $L$ be a hyperbolic alternating knot or link given by a reduced alternating diagram $D$.
		If two checkerboard surfaces of D are totally geodesic in $S^3-L$, then L is one of the links in Figure \ref{threediagrams}.
		In particular, there is no alternating knot with two totally geodesic checkerboard surfaces.
	\end{Th}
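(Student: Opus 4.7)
The plan is to upgrade the situation of Theorem \ref{right angle} to a full right-angled completely realisable structure, so that the classification from Theorem \ref{completely realisable right-angled link} applies. Concretely, starting from the topological checkerboard decomposition of $S^3-L$ into two balls $P^+$ and $P^-$, I would show that when both checkerboard surfaces are totally geodesic these balls can be realised in $\mathbb{H}^3$ as convex right-angled ideal polyhedra with the same combinatorial type as $P^\pm$.

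First I would cut $S^3-L$ along the two totally geodesic checkerboard surfaces to produce the topological checkerboard polyhedra $P^+$ and $P^-$, and consider a lift of each to $\mathbb{H}^3$, giving 3-ball fundamental domains $\mathcal{P}^+$ and $\mathcal{P}^-$. Since the two checkerboard surfaces are totally geodesic, each face of $\mathcal{P}^\pm$ lies in a totally geodesic plane; the crossing arcs on the boundary lift to geodesics (this is where Proposition 2.1 of Thistlethwaite--Tsvietkova enters, ruling out coincidence of distinct ideal vertices and hence face degeneration). Thus every face of $\mathcal{P}^\pm$ is a genuine geodesic ideal $n$-gon. By Theorem \ref{right angle}, the two checkerboard surfaces meet at right angles in $S^3-L$, so every dihedral angle of $\mathcal{P}^\pm$ equals $\pi/2$.

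Next I would establish convexity. If $\mathcal{P}^+$ were not convex, some geodesic plane extending a face of $\mathcal{P}^+$ would cut through its interior; but that plane is a translate of one of the lifts of the checkerboard surfaces, which have already been removed in the cutting step, a contradiction. The same reasoning works for $\mathcal{P}^-$. To finish, I would check that under the straightening isotopy $\mathcal{P}^\pm$ stays homeomorphic to a ball with the same combinatorial structure as the original $P^\pm$. The only way this could fail is for two distinct topological edges to become isotopic in $\mathbb{H}^3$, but this would produce a bigon in the complement of one of the checkerboard surfaces, violating Theorem 3.1 of \cite{adams2008totally} (as already invoked in the proof of Lemma \ref{no bigon region}).

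With these facts, $\mathcal{P}^+$ and $\mathcal{P}^-$ are convex right-angled ideal hyperbolic polyhedra combinatorially equivalent to the checkerboard polyhedra, and their gluing is forced by the topology to match the checkerboard gluing; hence $L$ is right-angled completely realisable. Theorem \ref{completely realisable right-angled link} then identifies $L$ with one of the three links in Figure \ref{threediagrams}, and since all three are genuine links (not knots), there is no alternating knot with both checkerboard surfaces totally geodesic. The main obstacle is the convexity/non-collapse step: once one knows the lifts are convex ideal polyhedra of the correct combinatorial type, everything else reduces to previously established results, so the heart of the argument is ruling out pathologies of the straightening, which is handled cleanly by combining the no-bigon consequence of total geodesicity with the non-degeneracy of lifts from \cite{thistlethwaite2014alternative}.
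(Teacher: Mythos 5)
Your proposal is correct and follows essentially the same route as the paper: the paper's proof of Theorem \ref{main main results} likewise lifts the checkerboard polyhedra to convex right-angled ideal polyhedra in $\mathbb{H}^3$, using Proposition 2.1 of \cite{thistlethwaite2014alternative} for non-degeneration of faces, Theorem \ref{right angle} for the right dihedral angles, the ``faces already cut away'' argument for convexity, and the no-bigon consequence of \cite[Theorem 3.1]{adams2008totally} to rule out edge collisions, before concluding via Theorem \ref{completely realisable right-angled link}. No substantive differences.
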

		
\subsection{Weaving knot} \label{weaving knot}
	A \textit{weaving knot} $W(p,q)$ is the alternating knot or link with the same projection as the standard p-braid $(\sigma_1...\sigma_{(p-1)})^q$ projection of the torus knot or link $T(p,q)$.
	We require $p \geq 3$ and $q \geq 2$ in this definition and only consider hyperbolic weaving knots below.
	The diagram having the projection in the definition is called the standard diagram of a weaving knot.
	Denote it by $D$.
	\begin{Cor}
		All hyperbolic weaving knots $W(p,q)(p\geq3,q\geq2)$ except $W(3,3)$ and $W(4,4)$ have both checkerboard surfaces of the standard diagram not totally geodesic.
	\end{Cor}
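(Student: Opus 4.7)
The plan is to invoke Theorem~\ref{main main results} and then reduce to a purely combinatorial comparison of projection graphs. First I would observe that if the standard diagram $D$ of a hyperbolic weaving knot $W(p,q)$ has both checkerboard surfaces totally geodesic, then by chaining Theorem~\ref{main main results}, Theorem~\ref{completely realisable right-angled link}, and Proposition~\ref{regular imply no}, the projection graph of $D$ must coincide, as a $4$-valent plane graph, with the $1$-skeleton of one of the three polyhedra: the octahedron, the cuboctahedron, or the icosidodecahedron. It therefore suffices to determine for which $(p,q)$ the standard weaving projection realizes one of these three $1$-skeletons.

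Next I would enumerate the faces of the standard projection of $W(p,q)$ by inspecting one fundamental domain of one step of the braid $(\sigma_1\cdots\sigma_{p-1})^q$. Each of the $q$ steps contributes $p-2$ small triangular faces, one wedged between each pair of consecutive crossings $\sigma_i,\sigma_{i+1}$ inside the step, together with a single ``between-step'' face of size $p$ joining it to the next step; and the braid closure on $S^2$ contributes two ``polar'' faces of size $q$. Using $V=q(p-1)$ and $E=2q(p-1)$, Euler's formula gives $F=q(p-1)+2$, and the check $\sum_f |f|=2E$ verifies the face sizes. So the face-size multiset of $D$ is $\{3^{\,q(p-2)},\,p^{\,q},\,q^{\,2}\}$, with repeated sizes aggregated when $p=3$, $q=3$, or $p=q$.

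Then I would match this face data against each of the three candidate polyhedra. The octahedron (eight triangles, nothing else) forces $p=q=3$. The cuboctahedron ($8$ triangles, $6$ squares) forces $p=q=4$: a short case check on whether $p$ or $q$ equals $3$ or $4$ rules out every other option, because the multiplicity of squares is at most three unless both indices equal $4$. The icosidodecahedron ($20$ triangles, $12$ pentagons) cannot be realized at all: the multiset forces $\{p,q\}\subset\{3,5\}$, and the four cases $(3,3),(3,5),(5,3),(5,5)$ all produce either too few pentagons or the wrong triangle count. Combined with the classical identifications $W(3,3)=$~Borromean rings and $W(4,4)=$~cuboctahedron link, this yields the corollary.

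The main bookkeeping point, and essentially the only non-routine step, is the verification of the face-size list for the standard weaving projection, and in particular the claim that the between-step faces are $p$-gons while the two polar faces are $q$-gons rather than the reverse. I expect this to come out of a careful tracing of face boundaries in a single fundamental domain of one step together with the braid closure; once it is in hand, the rest of the argument is arithmetic.
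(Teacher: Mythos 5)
Your overall strategy is the same as the paper's: invoke Theorem~\ref{main main results} to reduce to a finite combinatorial check of which standard weaving diagrams can be one of the three polyhedral diagrams. The paper's version of that check is lighter than yours: the crossing number of $W(p,q)$ is $(p-1)q$, so one only needs $(p-1)q\in\{6,12,30\}$, which leaves a short explicit list of pairs $(p,q)$ to examine directly (e.g.\ by component count $\gcd(p,q)$ or by face structure). Your route through the full face-size multiset would also work in principle, but the one step you yourself flag as ``the only non-routine step'' is wrong.

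The face-size multiset of the standard closed-braid diagram of $(\sigma_1\cdots\sigma_{p-1})^q$ is \emph{not} $\{3^{\,q(p-2)},\,p^{\,q},\,q^{\,2}\}$ for $p\geq 5$. Tracing the face boundaries in one block: the face sitting above the crossing $\sigma_i$ of a block is a triangle when $i=1$ or $i=p-1$, but for $2\leq i\leq p-2$ it is a quadrilateral with two vertices in the current block and two in the next block. So each block contributes $2$ triangles and $p-3$ squares, and the correct multiset is $\{3^{\,2q},\,4^{\,q(p-3)},\,q^{\,2}\}$. (Your check $\sum_f|f|=2E$ does not detect the discrepancy because both multisets satisfy it; the two happen to coincide exactly when $p=3$ or $p=4$, which is why your octahedron and cuboctahedron cases come out right. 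A sanity check that exposes the error: as $p,q\to\infty$ the standard weaving diagram must look locally like the square grid of the infinite weave, so almost all faces are squares, whereas your count would make almost all faces triangles.) With the corrected multiset the icosidodecahedron case is ruled out even more quickly --- it has no squares, forcing $p=3$, and then the face data $\{3^{2q},q^2\}$ can never be $20$ triangles plus $12$ pentagons --- so your final conclusion survives, but the argument as written asserts a false face count and the arithmetic built on it (e.g.\ ``the multiset forces $\{p,q\}\subset\{3,5\}$'') needs to be redone.
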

	This result generalizes Example 1.13 in \cite{adams2007noncompact}.
	Note there is a symmetry between two checkerboard surfaces of the standard diagram of $W(p,q)$.
	As a consequence of Mostow-Prasad rigidity it is an isometry between these two surfaces.
	Therefore they are both totally geodesic or both not totally geodesic.
	\begin{proof}
		The crossing number of weaving knot $W(p,q)$ is $(p-1)*q$.
		By Theorem \ref{main main results}, we only need to examine the cases where $(p-1)*q=6, 12, 30$.
		One can check directly that only $W(3,3)$ and $W(4,4)$ are links in Figure \ref{threediagrams}.
	\end{proof}
	\begin{Rem}
		The \textit{infinite weave} $\mathcal{W}$ is defined to be the infinite alternating link with the square grid projection.
		Champankar, Kofman and Purcell prove that as $p,q \to \infty$,$S^3-W(p,q)$ approaches $\mathbb{R}^3-\mathcal{W}$ as a geometric limit\cite[Theorem 1.2]{champanerkar2016volume} and the link $\mathcal{W}$ have totally geodesic checkerboard surfaces\cite[Theorem 5.1]{champanerkar2019geometry}.
	\end{Rem}

\bibliographystyle{plain}

\end{document}